%
%
%
%
%

%
\RequirePackage{fix-cm}
\documentclass[smallextended]{svjour3forpreprint}       
\smartqed  
\usepackage{graphicx}
\usepackage{amsmath,bm}
\usepackage{amssymb}
\usepackage[abbrev]{amsrefs}
\usepackage{mathtools}
\usepackage{enumerate}
%
%
%

\spnewtheorem*{notation}{Notation}{\bf}{\rm}
\newcommand{\bone}{\mathbf{1}}
\newcommand{\sg}{\sigma}
\newcommand{\N}{\mathbb{N}}
\newcommand{\R}{\mathbb{R}}
\newcommand{\cN}{\mathcal{N}}
\newcommand{\cW}{\mathcal{W}}
\newcommand{\n}{{\bf n}}
\newcommand{\dl}{\delta}
\newcommand{\wg}{\wedge}
\newcommand{\eps}{\varepsilon}
\newcommand{\zt}{\zeta}
\newcommand{\om}{\omega}
\newcommand{\cl}[1]{\mkern 1.5mu\overline{\mkern-1.5mu#1\mkern-1.5mu}\mkern 1.5mu}
\DeclareMathOperator*{\esssup}{ess\,sup}
\newcommand{\relmiddle}[1]{\mathrel{}\middle#1\mathrel{}}

\numberwithin{equation}{section}
\allowdisplaybreaks[3]
\mag=1250


\begin{document}

\title{Pathwise uniqueness and non-explosion property of Skorohod SDEs with a class of non-Lipschitz coefficients and non-smooth domains\thanks{This study was supported by JSPS KAKENHI Grant Number JP19H00643.}
}

\titlerunning{Pathwise uniqueness and non-explosion property of Skorohod SDEs}        

\author{Masanori Hino         \and
        Kouhei Matsuura  \and Misaki Yonezawa
}


\institute{M. Hino \at
              Department of Mathematics, Kyoto University, Kyoto 606-8502, Japan \\
              \email{hino@math.kyoto-u.ac.jp}           
           \and
            K. Matsuura \at
              Institute of Mathematics, University of Tsukuba, 1-1-1, Tennodai, Tsukuba, Ibaraki, 305-8571, Japan\\
              \email{kmatsuura@math.tsukuba.ac.jp}
           \and
           M. Yonezawa \at
              Daiwa Securities Co. Ltd., Chiyoda-ku, Tokyo 100-6752, Japan\\
              \email{misaki.yonezawa@daiwa.co.jp}
}

\date{Received: date / Accepted: date}

\maketitle

\begin{abstract}
Here we study stochastic differential equations with a reflecting boundary condition. 
We provide sufficient conditions for pathwise uniqueness and non-explosion property of solutions in a framework admitting non-Lipschitz continuous coefficients and non-smooth domains.
\keywords{Skorohod SDE \and non-Lipschitz coefficient \and pathwise uniqueness \and non-explosion property}
 \subclass{ 60H10}
\end{abstract}

\section{Introduction}

Let $w=\{w(t)\}_{t\ge0}$ be a one-dimensional Brownian motion on $\R$ starting in $[0,\infty)$. A reflecting Brownian motion $\xi=\{\xi(t)\}_{t\ge0}$ on $[0,\infty)$ is characterized by the solution of the following (pathwise) equation:
\begin{align}\label{eq:sk}
\begin{cases}
\xi=w+\phi, \\
\phi \text{ is non-decreasing on $[0,\infty)$, $\phi(0)=0$, and} \\
\phi(t)=\int_{0}^{t}\bone_{\{0\}}(\xi(s))\,d\phi(s),\quad t \ge 0.
\end{cases}
\end{align}
Equation~\eqref{eq:sk} for a continuous function $w=\{w(t)\}_{t\ge0}$ with a nonnegative initial value is called the Skorohod problem for $((0,\infty),w)$. This equation has a unique solution described as 
\begin{align*}
\xi(t)=
\begin{cases}
w(t),& 0 \le t \le \tau, \\
w(t)-\inf \{ w(s) \mid \tau \le s \le t \},& t>\tau,
\end{cases}
\end{align*}
where $\tau=\inf\{s>0 \mid w(s)<0\}$. Given a multidimensional domain $D \subset \R^d$ and an $\R^d$-valued continuous function $w$ on $[0,\infty)$, the Skorohod problem for $(D,w)$ can be considered similarly to \eqref{eq:sk} (see \cite{S} for a precise formulation).
Tanaka~\cite[Theorem~2.1]{T} showed that the Skorohod problem has a unique solution if $D$ is a convex domain. Saisho~\cite[Theorem~4.1]{S} extended this result to more general domains satisfying conditions {\bf (A)} and {\bf (B)}, which are defined in Section~2. The class of domains $D$ satisfying these conditions includes all convex domains and domains with a bounded $C^2$-boundary, and admits some non-smoothness.

The Skorohod problem is generalized to a stochastic differential equation (SDE) as follows. Let $(\Omega ,\mathcal{F},P)$ be a probability space. Let $D$ be a domain of $\R^d$ and denote its closure by $\cl{D}$. Given an $\R^d$-valued function $b$ and a $d \times d$ matrix-valued function $\sg$ on $[0,\infty) \times \Omega \times \cl{D}$, we are concerned with the following SDE:
\begin{eqnarray}\label{eq:sk2}
\begin{cases}
dX(t)=\sg(t,\cdot,X(t))\,dB(t)+b(t,\cdot,X(t))\,dt+d\Phi_{X}(t), &  t \ge 0, \\
X(0) \in \cl{D}. 
\end{cases}
\end{eqnarray}
Here, $\{B(t)\}_{t \ge 0}$ denotes a $d$-dimensional Brownian motion, and $\Phi_X$ is a reflection term, which is an unknown continuous function of bounded variation with properties \eqref{eq:lt1}, \eqref{eq:lt2}, and \eqref{eq:lt0}, which are presented below. Equation~\eqref{eq:sk2} is called a Skorohod SDE, which is a natural generalization of the Skorohod problem. If $b=0$ and $\sg$ is the identity matrix, the solution to \eqref{eq:sk2} is simply a reflecting Brownian motion on $\cl{D}$. If $D$ satisfies conditions {\bf (A)} and {\bf (B)}, and if coefficients $\sg$ and $b$ depend on only $x \in \cl{D}$ and are bounded continuous functions, then a solution exists for \eqref{eq:sk2} (see \cite[Remark~5.1]{S} and also Remark~\ref{rem:def} below).  If $\sg$ and $b$ depend on only $x \in \cl{D}$ and are Lipschitz continuous on $ \cl{D}$, a standard argument by the Gronwall inequality leads us to pathwise uniqueness of \eqref{eq:sk2} (\cite[Lemma~5.6]{S}). 
We are therefore interested in the case where $\sg$ and $b$ are not necessarily Lipschitz continuous.
Since \cite{Z} obtained a satisfactory sufficient condition for pathwise uniqueness in the one-dimensional case, we consider general dimensions.

Note that for usual SDEs without reflection terms, pioneering works by Yamada and Watanabe~\cites{YW1,YW2} have already treated non-Lipschitz coefficients. 
Although a number of related studies have been done since then, here we cite only a couple of works directly related to this paper \cites{FZ,L}. 
The arguments in \cite{FZ} were adapted in \cite{BY} for study of the Skorohod SDE~\eqref{eq:sk2} and to obtain sufficient conditions for pathwise uniqueness when $D$ is admissible, that is, when $D$ is roughly described as
\[
D=\{x \in \R^d \mid \varphi(x)>0\}\quad\text{and}\quad \partial D=\{x \in \R^d \mid \varphi(x)=0 \}
\]
for some nice $C^2$-function $\varphi$. In particular, $D$ has a smooth boundary.
Their arguments depend on this smoothness. 

In this paper, we consider the Skorohod SDE \eqref{eq:sk2} for domains $D$ satisfying only condition {\bf (A)} (and {\bf (B)} for some claims). First, we prove pathwise uniqueness under certain conditions that allow non-Lipschitz coefficients $\sg$ and $b$ (Theorem~\ref{thm:1}).
Our arguments are based on those of \cite[Theorem~2]{L} and inherit the assumptions in that work.
Note that condition~{\bf (A)} does not necessarily hold even when the boundary of $D$ is in $C^1$.
However, this is consistent with the fact that the pathwise uniqueness of a reflecting Brownian motion on $\cl{D}$ can fail for $C^1$ domains. See \cite[Theorem~4.1]{BB} for such an example. 

Since the solution to \eqref{eq:sk2} may explode in general, we provide two sufficient conditions for the solution to be not explosive (Theorems~\ref{thm:3} and \ref{thm:2}). 
In Theorem~\ref{thm:3}, the main conditions are described using a Lyapunov-type function.
This result corresponds to \cite[Theorem~1]{L}, where the non-explosion property of SDEs without reflection terms was discussed.
As seen in Example~\ref{ex:exc}, these conditions fit for convex domains and domains whose boundaries are (globally) described by smooth functions. In both cases, the norms of coefficients $\sg$ and $b$ in \eqref{eq:sk2} can grow as $|x|(\log|x|)^{1/2}$ in $x$ as $|x| \to \infty$. 
In Theorem~\ref{thm:2}, we discuss the non-explosion property for more general domains in a sense, but with more restrictive conditions on coefficients. 
The following are typical examples where Theorem~\ref{thm:2} can be applied:
\begin{itemize}
\item The coefficients grow by at most $|x|(\log|x|)^{1/2}$ in $x$ as $|x| \to \infty$ and are bounded near the boundary (Example~\ref{ex:exc2}(1));
\item The coefficients have at most sub-linear growth of order $(1/2)-\eps$ in $x$ for some $\eps>0$ as $|x| \to \infty$ (Example~\ref{ex:exc2}(2)).
\end{itemize}
The proof of Theorem~\ref{thm:2} is quite different from that of Theorem~\ref{thm:3}; the key idea is to prove that solutions of \eqref{eq:sk2} approach the boundary of $D$ only a finite number of times in a sense almost surely\ on each finite time interval (Lemma~\ref{lem:cons3}). 

The remainder of this paper is organized as follows: In Section~2, we set up our framework, state its main theorems (Theorems~\ref{thm:1}, \ref{thm:3}, and \ref{thm:2}), and present some typical examples. 
In Section~3, we use the arguments in \cite{L} to prove Theorems~\ref{thm:1} and \ref{thm:3}. As an application of Theorem~\ref{thm:1}, we give a sufficient condition for \eqref{eq:sk2} to have a strong solution (Corollary~\ref{cor:1}).  
In Section~4, we prove Theorem~\ref{thm:2}.
\begin{notation}
The following symbols are used in the paper.
\begin{itemize}
\item $\R_{+}$ is the half-line $[0,\infty)$.
\item $\R^d \otimes \R^d$ denotes the set of all real square matrices of size $d$, and   $A^{\ast}$ denotes the transpose of $A \in \R^d \otimes \R^d$.
\item $\langle\cdot,\cdot\rangle$ and $|\cdot|$ denote the standard inner product and norm of $\R^n$, respectively.
\item $\|\cdot\|$ denotes the Hilbert--Schmidt norm of $\R^d \otimes \R^d$.
\item For $x\in\R^d$ and $r>0$, $B(x,r)$ (resp.\ $\cl{B}(x,r)$) denotes the open (resp.\ closed) ball in $\R^d$ with center $x$ and radius $r$. 
We write $B(r)$ for $B(0,r)$.
\item For $a,b \in [-\infty,\infty]$, we write $a \vee b=\max\{a,b\}$ and $a \wg b=\min \{a,b\}$.
\item $\#I$ is the cardinality (number of elements) of a set $I$.
\item $\inf\emptyset=\infty$ by convention.
\end{itemize}
\end{notation}

\section{Main Results}
Let $D$ be a domain of $\R^d$. Let $\cl{D}_{\Delta}$ denote $\cl{D}$ or $\cl{D} \cup \{\Delta\}$ (the one-point compactification of $\cl{D}$) according whether $\cl{D}$ is compact or non-compact, respectively.  
We define
\[
\cW(\cl{D}_{\Delta})=\left\{w \colon \R_{+} \to \cl{D}_{\Delta}\relmiddle|\parbox{0.57\textwidth}{$w$ is continuous, $w(0) \in \cl{D}$,
and if $w(t)=\Delta$ for some $t \ge 0$, then $w(s)=\Delta$ for any $s \ge t$}\right\}. 
\]
For each $w \in \cW(\cl{D}_{\Delta})$, we define the lifetime $\zt(w)$ of $w$ as 
\[
\zt(w)=\inf\{t>0 \mid w(t)=\Delta \}.
\]
For $x \in \partial D(=\cl{D} \setminus D)$ and $r \in (0,\infty)$, we define $\cN_{x,r}$ and $\cN_{x}$ as
\[
\cN_{x,r}= \{\n \in \R^d \mid |\n|=1,\ B(x-r\n,r)  \cap D =\emptyset \}, \quad
\cN_{x}=\bigcup_{r \in (0,\infty)}\cN_{x,r}.
\]
An element in $\cN_x$ is called an inward normal unit vector at $x$. 

Let $(\Omega, \mathcal{F},\{ \mathcal{F}_t\}_{t \ge 0}, P)$ be a filtered probability space with the usual condition. The expectation with respect to $P$ is denoted by $E[\cdot]$. We denote by $E[\cdot:A]$ the expectation on event $A \in \mathcal{F}$. Let $\sg\colon\R_{+} \times \Omega \times \cl{D} \to \R^d \otimes \R^d$ and $b\colon \R_{+} \times \Omega  \times \cl{D} \to \R^d$ be measurable functions. Throughout this paper, we assume the following:
\begin{itemize}
\item  $\R_{+} \times \Omega \ni (t,\om) \mapsto \sg(t,\om,x)$ and $\R_{+} \times \Omega \ni (t,\om ) \mapsto b(t,\om,x)$ are progressively measurable for any fixed $x \in \cl{D}$.
\end{itemize}

We now provide an explicit definition of the Skorohod SDE.
\begin{definition}
A pair of $\{ \mathcal{F}_t\}_{t \ge 0}$-adapted processes $(X,\Phi_{X})$ is called a solution of the Skorohod SDE 
\begin{eqnarray}\label{eq:sk3}
\begin{cases}
dX(t)=\sg(t,\cdot,X(t))\,dB(t)+b(t,\cdot,X(t))\,dt+d\Phi_{X}(t), \quad t \ge 0 ,& \\
X(0) \in \cl{D} &
\end{cases}
\end{eqnarray}
if the following conditions are satisfied:
\begin{itemize}
\item  For $P$-a.s., $X=\{X(t)\}_{t \ge0 }$ belongs to $\cW(\cl{D}_{\Delta})$.
\item  For $P$-a.s., $\Phi_X=\{\Phi_X(t)\}_{ t \ge 0}$ is an $\R^d$-valued continuous function on $[0,\zt_X)$ of bounded variation on each compact interval, where we define $\zt_{X}(\omega)=\zt(X(\omega))$, $\omega \in \Omega$. 
Furthermore, for $P$-a.s.,
\begin{align}
\Phi_{X}(0)&=0, \label{eq:lt1}\\
|\Phi_{X}|_t&=\int_{0}^{t}\bone_{\partial D}(X(s))\,d|\Phi_{X}|_s, \quad t< \zt_X \label{eq:lt2},\\
\shortintertext{and}
\Phi_{X}(t)&=\int_{0}^{t}\n(s)\,d|\Phi_{X}|_s,\quad t<\zt_X. \label{eq:lt0}
\end{align}
Here, $|\Phi_{X}|_t$ denotes the total variation process of $\Phi_{X}$, and $\n(s) \in \cN_{X(s)}$ if $X(s) \in \partial D$.
\item For $P$-a.s.,
\begin{equation}\label{eq:m}
\int_0^t\{ \|\sg(s,\cdot,X(s))\|^2+|b(s,\cdot,X(s))|\}\,ds<\infty ,\quad t \in [0,\xi_X).
\end{equation}
\item For $P$-a.s.,
\begin{align*}
X(t)=X(0)+\int_0^t \sg(s,\cdot,X(s))\,dB(s)+\int_0^t b(s, \cdot,X(s))\,ds+\Phi_{X}(t), \\
t \in [0,\zt_X),
\end{align*}
where $\{B(t)\}_{t \ge 0}$ is a $d$-dimensional $\{\mathcal{F}_t\}_{t \ge 0}$-Brownian motion.
\end{itemize}
\end{definition}
We often say that $X$ is a solution of \eqref{eq:sk3} without referring to $\Phi_X$.

Following \cites{LS,S}, we introduce conditions {\bf (A)} and {\bf (B)} on $D$ as follows:
\begin{enumerate}[\bf (A)]
\item There exists $r_0 \in (0,\infty)$ such that for any $x \in \partial D$,
\begin{align*}
\cN_x=\cN_{x,r_0}\neq \emptyset.
\end{align*}
\item There exist $\dl \in (0,\infty)$ and $\beta \in [1,\infty)$ with the requirement that for any $x \in \partial D$ there exists a unit vector $\bone_x$ such that
\begin{align*}
\langle  \bone_x, \n \rangle  \ge 1/\beta 
\end{align*}
for any $\n \in \bigcup_{y \in B(x,\dl) \cap \partial D} \cN_y$.
\end{enumerate}
\begin{remark}[{\cite[Remark~1.1]{S}}]\label{rem:1}
Let $x \in \partial D$, $r>0$, and let $\n \in \R^d$ be a unit vector. Then, the following conditions are equivalent:
\begin{itemize}
\item $\n \in \cN_{x,r}$.
\item For any $y \in \cl{D}$,  $\langle y-x, \n \rangle + (2r)^{-1}|y-x|^2 \ge 0.$
\end{itemize}
Indeed,  $\n \in \cN_{x,r}$ if and only if $|y-(x-r\n)|\ge r$ for any $y \in \cl{D}$, that is, 
\[
|y-x|^2+2\langle y-x,r\n \rangle+r^2 \ge r^2
\quad\text{for any }y \in \cl{D}.
\]
\end{remark}

\begin{remark}\label{rem:def}
\begin{enumerate}
\item
Our definition of the Skorohod SDE is more general than that in \cite{S}, in that the lifetime is considered.
\item
By the same argument as in \cite[Theorem~5.1]{S}, equation \eqref{eq:sk3} possesses a (not necessarily strong) solution with infinite lifetime if the following are satisfied (see also \cite[Remark~5.1]{S}):
\begin{itemize}
\item $D$ satisfies conditions {\bf (A)} and {\bf (B)}.
\item For $P$-a.s., $\sg$ and $b$ are  continuous in $(t,x) \in \R_{+}\times\cl{D}$.
\item For any $T>0$,
\[\esssup_{\om \in \Omega}\sup_{t \in[0,T],\,x \in \cl{D}} \{ \|\sg(t,\om,x) \| \vee |b(t,\om,x)| \}<\infty.\]
\end{itemize}
\item In \cite[Theorem~2.2]{RS}, the authors provided sufficient conditions for equation \eqref{eq:sk3} to have a solution when $\sg$ and $b$ are only measurable.
\end{enumerate}
\end{remark}
Even if $\sigma$ and $b$ are locally bounded, we can obtain local solutions for \eqref{eq:sk3}  by Remark~\ref{rem:def}~(2). The solutions are strong once we prove the pathwise uniqueness of solutions for \eqref{eq:sk3}. We can then obtain a strong solution to \eqref{eq:sk3} from a standard localization argument. Such discussions are rigorously presented 
in Corollary~\ref{cor:1}, below.
Therefore, we first study the pathwise uniqueness of the Skorohod equation~\eqref{eq:sk3}. The definition is as follows:
\begin{definition}
We say that pathwise uniqueness of solutions for \eqref{eq:sk3} holds if for any two solutions $(X,\Phi_X)$ and $(Y,\Phi_{Y})$ of \eqref{eq:sk3} that are defined on the same filtered probability space with the same $d$-dimensional Brownian motion $\{B(t)\}_{t \ge 0}$ such that $X(0)=Y(0)$ $P$-a.s., we have $X(t)=Y(t)$ for all $t\ge 0$ $P$-a.s.
\end{definition}
\begin{remark}\label{rem:yw}
By the Yamada--Watanabe theorem~\cite[Corollary~3]{YW1}, the existence and pathwise uniqueness of solutions to \eqref{eq:sk3}
ensure the existence of a strong solution to \eqref{eq:sk3}. Note that the Yamada--Watanabe theorem was proved for SDEs without reflection terms. However, the proof is also valid for \eqref{eq:sk3}.
\end{remark}

To describe a sufficient condition for pathwise uniqueness, we introduce the following condition for a nonnegative and Borel measurable function $\Lambda$ on $[0,1)$:
\begin{itemize}
\item[{\bf (L)}] There exists some $\eps_0 \in (0,1)$ such that $\Lambda$ is continuous and non-decreasing on $[0,\eps_0)$, and 
\begin{equation}\label{eq:zt0}
\int_{0}^{\eps_0}\frac1{\Lambda(s)}\,ds=\infty.
\end{equation}
\end{itemize}
Note that \eqref{eq:zt0} implies
\begin{equation}\label{eq:zt}
\int_{0}^{\eps}\frac1{\Lambda(s)}\,ds=\infty,\quad\eps\in(0,\eps_0].
\end{equation}
For example, $\Lambda(s)=s$, $\Lambda(s)=s\log(1/s)$, and $\Lambda(s)=s\log(1/s) \times \log \log (1/s)$ satisfy the above conditions. 

Let $g=\{g(t,\cdot)\}_{t\ge0}$ be a nonnegative progressively measurable process such that, for any $T\ge 0$,
\begin{align}
\int_{0}^T g(s,\cdot)\,ds <\infty\quad P\text{-a.s.} \label{eq:g}
\end{align}
A sufficient condition for pathwise uniqueness is given as follows:
\begin{theorem}\label{thm:1}
Assume condition {\bf (A)} and that for each $R>0$ there exists a Borel measurable function $\Lambda_R :[0,1) \to \R_{+}$ satisfying {\bf (L)} and for $P$-a.s.\,$\omega$,
\begin{align}
&\|\sg(t,\om,x)-\sg(t,\om,y)\|^2+2 \langle x-y ,b(t,\om,x)-b(t,\om,y)\rangle  \notag \\
&\le g(t,\omega) \Lambda_R(|x-y|^2) \label{eq:cond}
\end{align}
for any $t \ge 0$ and $x,y \in \cl{D} \cap B(R)$ with $|x-y|<1$. 
Then, the pathwise uniqueness of solutions for \eqref{eq:sk3} holds. 
\end{theorem}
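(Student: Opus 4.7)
The plan is to adapt the Yamada--Watanabe--Bihari strategy of \cite[Theorem~2]{L} to the reflected setting; the new ingredient needed is a boundary estimate obtained from condition~{\bf (A)} via Remark~\ref{rem:1}. Let $(X,\Phi_X)$ and $(Y,\Phi_Y)$ be two solutions of \eqref{eq:sk3} on the same filtered probability space, driven by the same Brownian motion, with $X(0)=Y(0)$ $P$-a.s. Set $Z := X-Y$ and $V_t := |\Phi_X|_t + |\Phi_Y|_t$. For each $N\in\N$ I would introduce
\[
\tau_N := \inf\!\left\{t\ge 0 : |X(t)|\vee|Y(t)|\ge N\ \text{or}\ V_t\ge N\ \text{or}\ \textstyle\int_0^t g(s,\cdot)\,ds\ge N\right\} \wg \inf\!\bigl\{t\ge 0 : |Z(t)|^2\ge \eps_0\bigr\},
\]
so that on $[0,T\wg\tau_N]$ condition \eqref{eq:cond} applies with $R=N$ and all relevant path functionals are bounded. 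Since $Z(0)=0$ and $Z$ is continuous, $\tau_N>0$, and it suffices to prove $Z\equiv 0$ on $[0,T\wg\tau_N]$ for every $T$ and $N$; pathwise uniqueness then follows by letting $N\to\infty$.

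Applying It\^o's formula to $|Z(t)|^2$ yields
\[
d|Z|^2 = \bigl\{\|\sg(t,\cdot,X)-\sg(t,\cdot,Y)\|^2 + 2\langle Z,\, b(t,\cdot,X)-b(t,\cdot,Y)\rangle\bigr\}dt + dM_t + 2\langle Z, d\Phi_X-d\Phi_Y\rangle,
\]
with $M$ a local martingale. The drift is bounded by $g(t,\cdot)\Lambda_N(|Z|^2)$ thanks to \eqref{eq:cond}. At any time $s$ with $X(s)\in\partial D$, condition~{\bf (A)} provides $\n(s)\in\cN_{X(s)}=\cN_{X(s),r_0}$, so Remark~\ref{rem:1} applied with $x=X(s)$, $y=Y(s)$ yields $\langle X(s)-Y(s),\n(s)\rangle \le |Z(s)|^2/(2r_0)$; the symmetric estimate for $\Phi_Y$ combines to give
\[
2\langle Z, d\Phi_X-d\Phi_Y\rangle \le \frac{1}{r_0}|Z|^2\,dV.
\]
I would then pass to the weighted process $U_t := e^{-V_t/r_0}|Z(t)|^2$; the product rule (with $V$ of bounded variation) combined with the above produces an exact cancellation of the reflection contribution, giving
\[
dU_t \le e^{-V_t/r_0}\bigl\{g(t,\cdot)\Lambda_N(|Z|^2)\,dt + dM_t\bigr\}.
\]

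With the boundary term absorbed, I would execute the Yamada--Watanabe cutoff of \cite{L}. Pick $a_n\downarrow 0$ with $\int_{a_n}^{a_{n-1}}\Lambda_N(s)^{-1}\,ds = n$ and $0\le\psi_n\le 2/(n\Lambda_N)$ continuous, supported in $[a_n,a_{n-1}]$, with $\int\psi_n = 1$; set $\phi_n(x) := \int_0^x\!\int_0^y\psi_n(z)\,dz\,dy$, so that $\phi_n\in C^2$, $0\le\phi_n'\le 1$, $\phi_n''=\psi_n$, and $\phi_n(x)\uparrow x$. Applying It\^o to $e^{-V_t/r_0}\phi_n(|Z|^2)$ and collecting terms, the first-order coefficient is controlled by $\phi_n'\cdot g\Lambda_N\le g\Lambda_N$; the quadratic-variation contribution $2\phi_n''(|Z|^2)|(\sg(t,\cdot,X)-\sg(t,\cdot,Y))^{\ast}Z|^2$ combines with the drift via the full bound \eqref{eq:cond} and the estimate $\psi_n\Lambda_N\le 2/n$ to give a contribution of order $1/n$ after integration; and the distortion of the reflection cancellation caused by replacing $|Z|^2$ with $\phi_n(|Z|^2)$ equals $\phi_n'(x)x - \phi_n(x) = \int_0^x\psi_n(z)z\,dz\le a_{n-1}$, which integrated against $V\le N$ is at most $a_{n-1}N/r_0\to 0$. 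Taking expectations, sending $n\to\infty$ using $\phi_n\uparrow\mathrm{id}$ and Fatou's lemma, and invoking the Bihari inequality with \eqref{eq:zt}, one concludes $E[|Z(t\wg\tau_N)|^2]=0$, hence $Z\equiv 0$ on $[0,T\wg\tau_N]$ a.s.

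I expect the principal obstacle to be the handling of the quadratic-variation term under the combined form of \eqref{eq:cond}: since \eqref{eq:cond} controls $\|\sg\|^2$ and $\langle Z,b\rangle$ only jointly, the naive estimate $\phi_n''|(\sg(X)-\sg(Y))^{\ast}Z|^2\le\phi_n''\|\sg(X)-\sg(Y)\|^2|Z|^2$ cannot be made $O(1/n)$ by bounding $\|\sg(X)-\sg(Y)\|^2$ in isolation; it must be reincorporated into the full drift of $d|Z|^2$ at just the right moment, as in \cite[Theorem~2]{L}. Coordinating this delicate absorption with the reflection-cancelling weight $e^{-V/r_0}$ is the heart of the argument, and will rely on the elementary identity $\phi_n'(x)x - \phi_n(x) = \int_0^x\psi_n(z)z\,dz\le a_{n-1}\to 0$ to guarantee that the two operations commute up to a vanishing error.
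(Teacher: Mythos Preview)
Your handling of the reflection term via the exponential weight $e^{-V_t/r_0}$ is sound and is a legitimate alternative to what the paper does. However, the difficulty you yourself flag as ``the principal obstacle'' is real and is not resolved by your proposal. Write $e(t)=\sg(t,\cdot,X(t))-\sg(t,\cdot,Y(t))$ and $f(t)=b(t,\cdot,X(t))-b(t,\cdot,Y(t))$. Under \eqref{eq:cond} only the combination $\|e\|^2+2\langle Z,f\rangle$ is bounded by $g\,\Lambda_N(|Z|^2)$; the diffusion increment $\|e\|^2$ alone is not. The second-order term in It\^o's formula for $\phi_n(|Z|^2)$ is $2\psi_n(|Z|^2)\,|e^{\ast}Z|^2\le 2\psi_n(|Z|^2)\,|Z|^2\|e\|^2$, and there is no way to absorb it into the first-order drift: attempting
\[
\phi_n'(|Z|^2)\bigl(\|e\|^2+2\langle Z,f\rangle\bigr)+2\psi_n(|Z|^2)\,|Z|^2\|e\|^2
\le \bigl(\phi_n'+2|Z|^2\psi_n\bigr)\bigl(\|e\|^2+2\langle Z,f\rangle\bigr)+R
\]
produces a remainder $R=-4|Z|^2\psi_n(|Z|^2)\langle Z,f\rangle$ involving $|f|$, which is nowhere controlled. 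Bounding the second-order term directly via $\psi_n\le 2/(n\Lambda_N)$ and $|Z|^2\le a_{n-1}$ on the support of $\psi_n$ leaves the factor $a_{n-1}/(n\Lambda_N(a_n))$ in front of $\|e\|^2$, and the defining relation $\int_{a_n}^{a_{n-1}}\Lambda_N^{-1}\,ds=n$ together with monotonicity of $\Lambda_N$ forces $a_{n-1}/\Lambda_N(a_n)\ge n$, so this factor does not vanish. The convex Yamada--Watanabe cutoff scheme therefore does not close under the purely joint hypothesis \eqref{eq:cond}.

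The paper (following \cite{L}) sidesteps the issue by using a \emph{concave} test function: set $\phi_r(s)=\int_0^s(\Lambda_R(u)+r)^{-1}\,du$ for $r>0$, extend it concavely to $\R$, and apply the It\^o--Tanaka formula to $\phi_r(|Z|^2)$. The second-order contribution $\tfrac12\int L^a\,\phi_r''(da)$ is then nonpositive and can simply be dropped; only the first-order drift survives, and that is precisely the combination controlled by \eqref{eq:cond}. For the reflection term the paper does not use an exponential weight but first observes (Lemma~\ref{lem:vee}(1)) that one may assume $\Lambda_R(s)\ge s$ without loss of generality, whence $\phi_r'(\xi)\,\xi=\xi/(\Lambda_R(\xi)+r)\le 1$; integrated against $d(|\Phi_X|+|\Phi_Y|)$ after localising the total variation by $M$, this gives a contribution bounded by $M/r_0$. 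Taking expectations and sending $r\downarrow 0$ yields $E\bigl[\int_0^{|Z(\rho)|^2}\Lambda_R(s)^{-1}\,ds\bigr]<\infty$, hence $|Z(\rho)|^2=0$ a.s.\ by \eqref{eq:zt}; no Bihari step is needed.
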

Combining Theorem~\ref{thm:1}, and Remarks~\ref{rem:def}(2) and \ref{rem:yw}, we obtain the following sufficient condition for \eqref{eq:sk3} to have a strong solution:
\begin{corollary}\label{cor:1}
Assume conditions {\bf (A)} and {\bf (B)} and the following:
\begin{enumerate}
\item  For each $R>0$ there exists a Borel measurable function $\Lambda_R :[0,1) \to \R_{+}$ satisfying {\bf (L)} and for $P$-a.s.\,$\omega$,
\begin{align*}
&\|\sg(t,\om,x)-\sg(t,\om,y)\|^2 \vee \langle x-y ,b(t,\om,x)-b(t,\om,y)\rangle \\
&\le g(t,\omega) \Lambda_R(|x-y|^2) 
\end{align*}
for any $t \ge 0$ and $x,y \in \cl{D} \cap B(R)$ with $|x-y|<1$.
\item For $P$-a.s.\ $\om \in \Omega$, the maps $\R_{+} \times \cl{D} \ni (t,x) \mapsto \sg(t,\om,x)$ and $\R_{+} \times \cl{D} \ni (t,x) \mapsto b(t,\om,x)$ are continuous.
\item For any $T>0$ and $R>0$, 
\[  \esssup_{\om \in \Omega}\sup_{t \in[0,T],\, x \in \cl{D} \cap B(R)} \{\| \sg(t,\om,x) \| \vee |b(t,\om,x)|\} <\infty.\] 
\end{enumerate}
Then, \eqref{eq:sk3} possesses a strong solution.
\end{corollary}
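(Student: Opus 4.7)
The plan is to localize the coefficients to the bounded setting of Remark~\ref{rem:def}(2), combine Theorem~\ref{thm:1} and Remark~\ref{rem:yw} to obtain a strong solution of each truncated SDE, and patch these strong solutions along exit times.

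For each integer $R\ge 1$, fix a smooth cutoff $\chi_R\colon\R^d\to[0,1]$ with $\chi_R\equiv 1$ on $\cl{B}(0,R)$ and $\chi_R\equiv 0$ outside $B(0,R+1)$, and set $\sg^R=\chi_R\sg$ and $b^R=\chi_R b$. Progressive measurability is inherited from $\sg,b$; joint continuity in $(t,x)$ for $P$-a.s.\ $\om$ follows from condition~(2) together with the smoothness of $\chi_R$; and condition~(3) applied at radius $R+1$ makes $\sg^R$ and $b^R$ bounded on $\R_+\times\Omega\times\cl{D}$ uniformly on each bounded time interval. By Remark~\ref{rem:def}(2) the $R$-truncated SDE therefore admits a weak solution with infinite lifetime. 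Expanding $\sg^R(x)-\sg^R(y)=\chi_R(x)(\sg(x)-\sg(y))+(\chi_R(x)-\chi_R(y))\sg(y)$ and the analogous identity for $b^R$, and combining condition~(1) with the Lipschitz property of $\chi_R$ and the boundedness from~(3), one checks that on $\cl{D}\cap B(0,R')$ for every $R'$ the bound
\[
\|\sg^R(t,\om,x)-\sg^R(t,\om,y)\|^2 \vee \langle x-y, b^R(t,\om,x)-b^R(t,\om,y)\rangle \le \tilde g(t,\om)\,\tilde\Lambda_{R'}(|x-y|^2)
\]
holds with $\tilde g=C_R+2g$ and, for example, $\tilde\Lambda_{R'}(u)=\Lambda_{R'\vee(R+1)}(u)+u$; a short case split according to whether $\Lambda_{R'\vee(R+1)}(s)\ge s$ or not confirms that $\tilde\Lambda_{R'}$ satisfies~\textbf{(L)}. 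Theorem~\ref{thm:1} thus delivers pathwise uniqueness for the $R$-truncated SDE, and Remark~\ref{rem:yw} upgrades the weak solution above to a strong solution $X^R$ with infinite lifetime.

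Define $\tau_R=\inf\{t\ge 0\mid|X^R(t)|\ge R\}$. Since $\chi_R(X^R(t))=1$ for $t<\tau_R$, the process $X^R$ solves the original equation~\eqref{eq:sk3} on $[0,\tau_R)$. For $R_1<R_2$, both $X^{R_1}$ and $X^{R_2}$ solve the $R_2$-truncated SDE as long as they remain in $B(0,R_1)$, with the same initial condition; pathwise uniqueness at level $R_2$ therefore forces them to agree up to their common first exit from $B(0,R_1)$, which consequently equals $\tau_{R_1}$. Setting $\zt=\lim_{R\to\infty}\tau_R$ and $X(t)=X^R(t)$ for $t<\tau_R$, $X(t)=\Delta$ for $t\ge\zt$---with $\Phi_X$ patched analogously---produces an $\{\mathcal{F}_t\}$-adapted process whose restriction to $[0,\tau_R)$ equals $X^R$. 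Each item of the definition of a solution then reduces to the corresponding property of some $X^R$, yielding the desired strong solution.

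The heart of the argument is already packaged inside Theorem~\ref{thm:1}; the main subtlety in the sketch above is the cutoff bookkeeping---verifying that the modified modulus $\tilde\Lambda_{R'}$ still lies in~\textbf{(L)}, and that pathwise uniqueness of the $R_2$-truncated equation pins down the processes $X^R$ on $[0,\tau_{R_1})$. Both are routine but essential for the patching to assemble a genuine strong solution with lifetime $\zt$.
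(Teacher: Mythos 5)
Your proposal follows essentially the same route as the paper: truncate the coefficients with a cutoff, invoke Remark~\ref{rem:def}(2) for existence of the truncated equation, verify the hypothesis of Theorem~\ref{thm:1} for the truncated coefficients by expanding the cross terms and using Lemma~\ref{lem:vee}(1) (equivalently your $\tilde\Lambda_{R'}(u)=\Lambda(u)+u$), upgrade to a strong solution via Remark~\ref{rem:yw}, and patch along exit times. The one bookkeeping point where you deviate from what is actually available: condition~(3) bounds $\|\sg\|\vee|b|$ only on $[0,T]\times(\cl{D}\cap B(R))$ for each finite $T$, so the cross term $(\chi_R(x)-\chi_R(y))\sg(t,\om,y)$ cannot be absorbed into a single constant $C_R$ valid for all $t\ge0$; either let $\tilde g(t,\om)$ be a locally bounded function of $t$ (which condition \eqref{eq:g} permits) or, as the paper does, include an additional time cutoff $v_n(t)$ so that the relevant supremum is taken over $t\in[0,n+1]$ only.
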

Next, we discuss the non-explosion property of the solution.
Let $\gamma\colon \R_{+} \to [1,\infty)$ be a continuous and non-decreasing function such that $\lim_{s \to \infty}\gamma(s)=\infty$ and 
\begin{align}\label{eq:gm}
\int_{0}^{\infty}\frac{1}{\gamma(s)}\,ds=\infty.
\end{align}
Functions
$\gamma(s)=s+1$, $\gamma(s)=s\log(s+1)+1$ are typical examples satisfying the above conditions.
\begin{theorem}\label{thm:3}
Assume condition {\bf (A)},
and that there exists a nonnegative function $V \in C^{1,2}([0,\infty) \times \R^d)$ with the following conditions:
\begin{enumerate}[\rm(V.1)]
\item  For any $t>0$,
\[
\lim_{R\to\infty}\inf_{s \in [0,t],\, x \in \cl{D} \setminus B(R)}V(s,x) =\infty.
\]
\item For any $x \in \partial D$, $t \ge 0$, and $\n \in \mathcal{N}_{x}$, $
\langle (\nabla V)(t,x),\n \rangle \le 0.$
\item For $P$-a.s.\,$\om$,
\begin{align}
&\|\sg(t,\om,x)\|^2(\Delta V)(t,x)+ 2 \langle b(t,\om,x),(\nabla V)(t,x)\rangle+  2\frac{\partial V}{\partial t}(t,x)\notag \\
& \le g(t,\omega)\gamma(V(t,x)) \label{eq:eqv}
\end{align}
for any $t \ge 0$ and $x\in \cl{D}$. 
\end{enumerate}
Then, the solutions to \eqref{eq:sk3} are non-explosive, that is, $P(\zt_{X}=\infty)=1$.
\end{theorem}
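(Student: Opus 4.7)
The plan is to adapt Khasminskii's non-explosion criterion to the reflected setting, using (V.2) to absorb the local-time term and a Bihari-type comparison to handle the sub-linear growth $\gamma$. Fix a solution $(X, \Phi_X)$ of \eqref{eq:sk3} and, for each $n \in \N$, set $\sigma_n = \inf\{t \ge 0 \mid V(t, X(t)) \ge n\}$. Coercivity (V.1) forces $V(t, X(t)) \to \infty$ as $t \uparrow \zt_X$ whenever $\zt_X < \infty$, so $\sigma_n < \zt_X$ on $\{\sigma_n < \infty\}$ a.s., and hence $\{\zt_X \le T\} = \bigcap_n \{\sigma_n \le T\}$ a.s. Accordingly it suffices to prove $\lim_n P(\sigma_n \le T) = 0$ for every $T > 0$.

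Applying It\^o's formula to $V(t, X(t))$ on $[0, t \wg \sigma_n]$ produces a drift term controlled by (V.3), namely $\tfrac{1}{2} g(s) \gamma(V(s, X(s)))\,ds$, a reflection integral $\int_0^\cdot \langle \nabla V(s, X(s)), \n(s)\rangle\,d|\Phi_X|_s$ which is non-positive by (V.2) since $\n(s) \in \cN_{X(s)}$, and a continuous local martingale. To exploit \eqref{eq:gm}, set $G(u) = \int_0^u dv/\gamma(v)$; this is non-decreasing on $[0,\infty)$ with $G(\infty) = \infty$. Applying It\^o's formula to $G(V(t, X(t)))$ (smoothing $\gamma$ from above by $C^1$ non-decreasing majorants if necessary) and using that $G'' = -\gamma'/\gamma^2 \le 0$ since $\gamma$ is non-decreasing, so the It\^o correction from $\langle V\rangle$ is non-positive, together with the above bounds yields
\begin{equation*}
G(V(t \wg \sigma_n, X(t \wg \sigma_n))) \le G(V(0, X(0))) + \tfrac{1}{2}\int_0^{t \wg \sigma_n} g(s)\,ds + \widetilde M(t \wg \sigma_n),
\end{equation*}
where $\widetilde M$ is a continuous local martingale on $[0, \zt_X)$.

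Let $(\rho_m)_m$ be a localizing sequence with $\rho_m \uparrow \zt_X$ a.s.\ that makes $\widetilde M$ a true martingale when stopped, and set $\theta_N = \inf\{t \ge 0 : \int_0^t g(s)\,ds \ge N\}$. Taking expectations on $[0, T \wg \sigma_n \wg \rho_m \wg \theta_N]$ and noting that $V(\sigma_n, X(\sigma_n)) = n$ on $\{\sigma_n < \zt_X\}$, Markov's inequality yields
\begin{equation*}
P(\sigma_n \le T \wg \rho_m \wg \theta_N) \le \frac{G(V(0, X(0))) + N/2}{G(n)}.
\end{equation*}
Sending $m \to \infty$ (using $\rho_m \uparrow \zt_X$ together with the fact that $\sigma_n < \zt_X$ on the event of interest), then $n \to \infty$ (so $G(n) \to \infty$), and finally $N \to \infty$ (using $\int_0^T g(s)\,ds < \infty$ a.s.) yields $P(\sigma_n \le T) \to 0$, and the theorem follows.

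The principal difficulty is ensuring the chain of limits is coherent: since $\widetilde M$ is only locally a martingale on $[0, \zt_X)$ and $\zt_X$ is itself the quantity we wish to bound, a naive localization would land on the tautology $P(\zt_X \le T) \le P(\zt_X \le T)$. The observation, drawn from (V.1), that $\sigma_n = \zt_X$ is incompatible with $\sigma_n < \infty$ is what breaks this circularity and permits the limit in $m$ to be taken cleanly on the event $\{\sigma_n \le T\}$. The non-smoothness of $\gamma$, which is only assumed continuous and non-decreasing rather than $C^1$, is handled by the standard monotone $C^1$-approximation indicated above and a passage to the limit.
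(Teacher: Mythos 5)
Your argument is essentially the paper's proof: both apply the It\^o--Tanaka/It\^o formula to $\psi(V(t,X(t)))$ with $\psi(u)=\int_0^u \gamma(v)^{-1}\,dv$, use concavity of $\psi$ (the paper works directly with the distributional second derivative and local time rather than smoothing $\gamma$) to discard the second-order term, kill the reflection integral via (V.2), bound the drift by $\tfrac12\int_0^{\cdot}g$ via (V.3) after stopping at your $\theta_N$ (the paper's $\chi_M$), and invoke (V.1) to conclude. The only point to tidy is that $G(V(0,X(0)))$ is random, so your Markov bound should be taken on an event such as $\{V(0,X(0))\le r\}$ (the paper restricts to $\{|X(0)|<r\}$) before letting $r\to\infty$; with that routine fix the proof is correct.
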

The following describes another sufficient condition for non-explosion.
For $x\in\cl{D}$, $\dl>0$, and $T>0$, we set 
\[
 M(x,\dl,T)=\esssup_{\om \in \Omega}\sup_{t \in[0,T],\,z \in B(x,\dl) \cap \cl{D}}\{ \|\sg(t,\om,z)\|^2 \vee |b(t,\om,z)|^2 \}.
\]
\begin{theorem}\label{thm:2}
Assume conditions {\bf (A)} and {\bf (B)}, and the following:
\begin{enumerate}
\item For $P$-a.s.\,$\om$,
\begin{align}
\|\sg(t,\om,x)\|^2 \vee |b(t,\om,x)|^2 \le g(t,\omega) \gamma(|x|^2) \label{eq:cond2}
\end{align}
for any $t \ge 0$ and $x\in \cl{D}$. 
\item For each $T>0$, there exist constants $C>0$, $\nu\in[0,1)$, $\hat\dl>0$, $\hat \beta\in(0,1)$, points $\{x_n\}_{n=1}^\infty\subset \partial D$, and positive numbers $\{\dl_n\}_{n=1}^\infty\subset[\hat\dl,\infty)$ such that $\partial D\subset \bigcup_{n=1}^\infty B(x_n,\hat \beta \dl_n)$ and
\begin{equation}\label{eq:thm2asmp}
 M(x_n,\dl_n,T)\le C \dl_n^\nu\quad \text{for any }n\in\N.
\end{equation}
\end{enumerate}
Then, the solutions of \eqref{eq:sk3} are non-explosive. 
\end{theorem}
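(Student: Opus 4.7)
The plan is to localize, leverage the covering in~(2) of the theorem together with condition~{\bf (B)} to bound the total variation $|\Phi_X|$ on compact time intervals up to the localization, and then close with a Lyapunov estimate based on~\eqref{eq:cond2}. I would fix $T>0$ and set $\tau_R=\inf\{t\ge 0 : |X(t)|\ge R\}$; it suffices to show $P(\tau_R\le T)\to 0$ as $R\to\infty$. On the localized interval $[0,T\wg\tau_R]$, $X$ takes values in $\cl{B}(R)$. Since $\partial D\cap\cl{B}(R)$ is compact and $\dl_n\ge\hat\dl$ for all $n$, the given cover reduces to a finite subcover $\partial D\cap\cl{B}(R)\subset\bigcup_{n\in I_R}B(x_n,\hat\beta\dl_n)$. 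The constraint $|X|\le R$ moreover forces $\dl_n\le 2R/(1-\hat\beta)$ for those indices $n\in I_R$ whose enlarged ball $B(x_n,\dl_n)$ can be exited during the localized interval.

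The core auxiliary result (Lemma~\ref{lem:cons3}) is that $X$ approaches the boundary only finitely often in the following quantitative sense. For each $n\in I_R$, introduce alternating stopping times $\sigma^{(n)}_0=0$ and
\[
\tau^{(n)}_k=\inf\{t>\sigma^{(n)}_k : X(t)\in B(x_n,\hat\beta\dl_n)\}, \quad
\sigma^{(n)}_{k+1}=\inf\{t>\tau^{(n)}_k : X(t)\notin B(x_n,\dl_n)\}.
\]
Each completed excursion $[\tau^{(n)}_k,\sigma^{(n)}_{k+1}]$ forces $X$ to traverse a distance $\ge(1-\hat\beta)\dl_n$ while the coefficients on $B(x_n,\dl_n)$ are bounded by $\sqrt{C\dl_n^\nu}$ by~\eqref{eq:thm2asmp}; a Doob-type exit-time estimate in a ball (after the Brownian scaling $t\mapsto\dl_n^{2-\nu}t$) then yields $E[\sigma^{(n)}_{k+1}-\tau^{(n)}_k\mid\mathcal{F}_{\tau^{(n)}_k}]\ge c\dl_n^{2-\nu}$, so the number $N_n$ of completed excursions by $T\wg\tau_R$ satisfies $E[N_n]\le T/(c\hat\dl^{2-\nu})$. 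Condition~{\bf (B)} applied at $x_n$ (after a refinement of the cover if necessary so $\hat\beta\dl_n\le\dl$) then bounds each excursion's contribution to the reflection by
\[
|\Phi_X|_{\sigma^{(n)}_{k+1}}-|\Phi_X|_{\tau^{(n)}_k}\le \beta\bigl\langle\bone_{x_n},\ X(\sigma^{(n)}_{k+1})-X(\tau^{(n)}_k)-\textstyle\int_{\tau^{(n)}_k}^{\sigma^{(n)}_{k+1}}\sg\,dB-\int_{\tau^{(n)}_k}^{\sigma^{(n)}_{k+1}} b\,ds\bigr\rangle,
\]
and summing over $k\le N_n$ and $n\in I_R$ gives an a.s.\ finite bound on $|\Phi_X|_{T\wg\tau_R}$.

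Finally, I would apply It\^o's formula to the Lyapunov function $V(x)=F(|x|^2)$ with $F(s)=\int_0^s du/\gamma(u+1)$, noting $F\to\infty$ at infinity by~\eqref{eq:gm}. Condition~\eqref{eq:cond2} forces the drift and quadratic-variation contributions of $dV(X)$ to be $O((1+g(t))\,dt)$, after an AM--GM on the $\langle X,b\rangle$ term and using that $sF'(s)$ and $s^2|F''(s)|\gamma(s)$ stay bounded for the admissible $\gamma$. The reflection term is controlled via Remark~\ref{rem:1} by $\langle X,\n\rangle\le|X|^2/(2r_0)$ and hence by a constant multiple of $d|\Phi_X|_t$. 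Taking expectation up to $T\wg\tau_R$ and combining with the preceding bound, $E[V(X(T\wg\tau_R))]$ stays finite, and Markov's inequality together with $V(R^2)\to\infty$ yields $P(\tau_R\le T)\to 0$. The hard part is the exit-time estimate giving $E[N_n]<\infty$, where the sub-linear scaling $\nu<1$ is essential, since it makes $\dl_n^{2-\nu}\ge\hat\dl^{2-\nu}$ uniformly bounded away from zero; a secondary technicality is aligning the variable scale $\dl_n$ in~\eqref{eq:thm2asmp} with the fixed scale $\dl$ of condition~{\bf (B)} through a preliminary refinement of the cover.
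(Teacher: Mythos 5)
Your overall strategy---show that $X$ completes only finitely many near-boundary excursions and combine this with a Lyapunov estimate---has the same skeleton as the paper's proof (Lemmas~\ref{lem:cons3} and \ref{lem:cons2}), but the step that carries all the difficulty is exactly the one you leave unproved. The claimed bound $E[\sigma^{(n)}_{k+1}-\tau^{(n)}_k\mid\mathcal{F}_{\tau^{(n)}_k}]\ge c\dl_n^{2-\nu}$ is not a ``Doob-type'' exit-time estimate for a diffusion with bounded coefficients, because on $[\tau^{(n)}_k,\sigma^{(n)}_{k+1}]$ the displacement of $X$ includes the reflection term $\Phi_X$, whose magnitude over a short time interval is not controlled by the elapsed time. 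Condition {\bf (B)} only yields $|\Phi_X|_t-|\Phi_X|_s\le\beta\langle\bone_{x_n},\Phi_X(t)-\Phi_X(s)\rangle$ with $\beta\ge1$, and the resulting self-referential inequality $|X(t)-X(s)|\le\Delta_{s,t}(W)+\beta\bigl(\Delta_{s,t}(W)+|X(t)-X(s)|\bigr)$ does not close. So the argument is circular: you need the exit-time bound to control $\Phi_X$, and you need to control $\Phi_X$ to get the exit-time bound. The paper breaks this circle with Saisho's deterministic estimate (Lemma~\ref{lem:AS}), which bounds the total variation $|X|_s^t$ by $C_1(1+(t-s)\|W\|_{\mathcal{H},[s,t],\theta}^{1/\theta})e^{C_2\Delta_{s,t}(W)}\Delta_{s,t}(W)$ purely in terms of the driving semimartingale $W$; the H\"older norm of $W$ on excursion intervals of length $1/j$ is then controlled via the Burkholder--Davis--Gundy and Garsia--Rodemich--Rumsey inequalities and Borel--Cantelli, and the interval is subdivided into roughly $\dl_{n}$ pieces so that the exponential factor stays bounded. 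That subdivision is precisely what makes the argument survive when $\dl_n$ is large.

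Two further points would sink the proposal even granting the exit-time bound. First, the ``preliminary refinement of the cover'' down to the scale $\dl$ of {\bf (B)} is incompatible with \eqref{eq:thm2asmp}: in Example~\ref{ex:exc2}(2) one has $\dl_n=|x_n|+1\to\infty$, and after shrinking the balls the only available coefficient bound on them is still $C\dl_n^\nu$, unbounded in $n$, so your per-excursion time estimate degenerates. Second, your closing Markov-inequality step needs $E[V(X(T\wg\tau_R))]$ bounded uniformly in $R$, hence $E[|\Phi_X|_{T\wg\tau_R}]$ bounded uniformly in $R$; but your estimate is of the form $\sum_{n\in I_R}E[N_n]\cdot O(\beta\dl_n)$, and both $\#I_R$ and $\max_{n\in I_R}\dl_n$ grow with $R$. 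The paper avoids bounding $|\Phi_X|$ altogether: it shows the number of completed near-boundary excursions is a.s.\ finite, and on the remaining excursions (started in $U_0$, away from the boundary) $d\Phi_X=0$, so the Lyapunov estimate for $|X(t)-X(\tau_k)|^2$ with $\varphi(t,u)=\int_0^t\gamma(2s+2u)^{-1}\,ds$ applies with no reflection contribution at all (Lemma~\ref{lem:cons2}).
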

\begin{example}\label{ex:exc}
The following are some examples meeting the assumptions in Theorem~\ref{thm:3}:
\begin{enumerate}
\item If $D$ is an unbounded convex domain, it satisfies conditions {\bf (A)} and {\bf (B)}. We take $x_0 \in D$ and set \[V(t,x)=|x-x_0|^2 ,\quad t \in [0,\infty),\ x \in \R^d.\]
We write $V(x)$ for $V(t,x)$ since $V(t,x)$ does not depend on $t$. $V$ satisfies (V.1). Because $D$ is convex, it follows that for any $x \in \partial D$ and $\n \in \cN_x$
\[\langle \nabla  V(x), \n \rangle =2 \langle x-x_0,\n \rangle \le 0, \] 
proving (V.2). If there exists $C>0$ such that \[\|\sg (t,\om,x)\| \vee |b(t,\om,x)| \le C[ |x| \{\log(|x|+1)\}^{1/2}+1 ]\] for any $(t,\om,x)\in [0,\infty) \times \Omega \times \cl{D}$, we then see that (V.3) holds with $\gamma(s)=s \log(s+1)+1$ $(s \ge0)$ and a sufficiently large constant function $g$.
\item Let  $H\colon [-1,\infty) \to \R$ be a smooth function such that $H(-1)=0$ and $H(s)>0$ for any $s>-1$. We also assume that there exist $m>0$ and $M>-1$ such that $H'(s) \vee H''(s)\le m$ for any $s>-1$, and 
\begin{equation}
\int_{-1}^{s}H(u)\,du \ge \frac{s^2}m \label{eq:eqh}
\end{equation}
for any $s>M$. We define a domain $D \subset \R^d$ as
\[
D=\{(x_1,x_2,\ldots,x_d) \in\R^d \mid x_1>-1,\ x_2^2+\cdots+x_d^2<H(x_1)^2\}.
\]
Since $H''(s)$ is bounded above, $D$ satisfies condition {\bf (A)}. Moreover, $\cN_x$ is a singleton for any $x=(x_1,x_2,\ldots,x_d) \in \partial D$ with $x_1>-1$. For each $x=(x_1,x_2,\ldots,x_d) \in \R^d$, we write $\tilde{x}=(x_2,\ldots,x_d)$ and denote the length of $\tilde{x}$ as $|\tilde{x}|$ 
 with an abuse of notation. Then, for any $x=(x_1,\tilde{x}) \in \partial D$ with $x_1>-1$, the inward unit vector $\n$ at $x$ is 
\begin{equation*}
\n=
\frac1{\sqrt{H'(x_1)^2+1}}\left(H'(x_1) ,-\frac{\tilde{x}}{|\tilde{x}|}\right).
\end{equation*}
We define a nonnegative function $V\colon [0,\infty) \times D \to \R$ as
\[
V(t,x)=\int_{-1}^{x_1}H(s)\,ds+\frac{m}{2}|\tilde{x}|^2,\quad t \in [0,\infty),\ x =(x_1,\tilde{x}) \in D.
\]
We write $V(x)$ for $V(t,x)$. Then,
$V(x)$ extends to a smooth function on $\R^d$. We see that $V$ satisfies conditions (V.1) and (V.2).
Assume that there exists $C_1>0$ such that 
\[
\|\sg(t,\om,x)\| \vee |b(t,\om,x)|\le C_1[ |x| \{\log (|x|+1)\}^{1/2}+1]
\] 
for any $(t,\om,x)\in [0,\infty) \times \Omega \times \cl{D}$. For any $x \in \cl{D}$, we then have
\begin{align*}
(\Delta V)(x)\le md, \quad
|(\nabla V)(x)|^2\le m^2(x_1+1)^2+m^2|\tilde{x}|^2.
\end{align*}
We see that the left-hand side 
 of \eqref{eq:eqv} is less than or equal to 
\[C_2 \{ |x|^2 \log (|x|+1)+1\},\quad x=(x_1,\tilde{x}) \in \cl{D}\]
for some $C_2>0$.
It follows from \eqref{eq:eqh} that there exists $C_3>0$ such that
$
V(x) \ge C_3 |x|^2
$
for any $x \in \cl{D}$. 
Therefore, (V.3) holds with $\gamma(s)=s \log(s+1)+1$ $(s \ge0)$ and a sufficiently large constant function $g$.
\end{enumerate}
\end{example}
\begin{example}\label{ex:exc2}
The following examples can apply Theorem~\ref{thm:2}.
Assume that $D$ satisfies {\bf (A)} and {\bf (B)} in both cases.
\begin{enumerate}
\item Suppose assumption~(1) in Theorem~\ref{thm:2}. Moreover, for each $T>0$, suppose that there exists $\hat\dl>0$ such that 
\[
\esssup_{\om\in\Omega}\sup_{t\in[0,T],\,x\in D(\hat\dl)}\{\|\sg(t,\om,x)\|\vee |b(t,\om,x)|\}<\infty,
\]
where $D(\hat\dl)=\bigcup_{y\in\partial D}B(y,\hat\dl)\cap \cl{D}$.
Then, assumption~(2) holds with $\nu=0$, $\hat\beta=1/2$, $\dl_n=\hat\dl$ for $n\in\N$, and $\{x_n\}_{n=1}^\infty\subset \partial D$ being taken so that $\partial D\subset\bigcup_{n\in\N}B(x_n,\hat\dl/2)$.
\item Suppose that there exist $C>0$ and $\eps \in (0,1/2)$ such that 
\[
\|\sg(t,\om,x)\| \vee |b(t,\om,x)|\le C(|x|^{1/2-\eps}+1)
\]
for any $(t,\om,x)\in [0,\infty) \times \Omega \times \cl{D}$.
Then, assumptions~(1) and (2) in Theorem~\ref{thm:2} hold with $\nu=1-2\eps$, $\hat\beta=1/2$, $\{x_n\}_{n=1}^\infty\subset \partial D$ such that $\partial D\subset\bigcup_{n\in\N}B(x_n,(|x_n|+1)/2)$, and $\dl_n=|x_n|+1$ for $n\in\N$.
\end{enumerate}
\end{example}

\section{Proofs of Theorem~\ref{thm:1}, Corollary~\protect\ref{cor:1} and Theorem~\ref{thm:3}}
We introduce the following lemma for later use:
\begin{lemma}\label{lem:vee}
\begin{enumerate}
\item Let $f$ be a nonnegative and non-decreasing function on an interval $(0,\eps]$ such that $\int_0^\eps f(t)^{-1}dt=\infty$. Then, $\int_0^\eps (f(t)\vee t)^{-1}dt=\infty$.
\item Let $h$ be a nonnegative and non-decreasing function on an interval $[r,\infty)$ such that $h \ge 1$ on $[r,\infty)$ and $\int_r^\infty h(t)^{-1}dt=\infty$. Then, $\int_r^\infty (h(t)\vee t)^{-1}dt=\infty$.
\end{enumerate}
\end{lemma}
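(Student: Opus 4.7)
My approach is to split the domain of integration according to whether $f(t)\ge t$ or $f(t)<t$ and handle the two regions separately, using monotonicity of $f$ to compare what comes out. For part (1), let $A=\{t\in (0,\eps]: f(t)\ge t\}$ and $B=(0,\eps]\setminus A$. On $A$ we have $f\vee t=f$ and on $B$ we have $f\vee t=t$, so
\[
\int_0^\eps (f(t)\vee t)^{-1}dt=\int_A f(t)^{-1}dt+\int_B t^{-1}dt,
\]
whereas the hypothesis reads $\int_A f^{-1}dt+\int_B f^{-1}dt=\infty$. If the first summand already diverges, the conclusion is immediate; otherwise $\int_B f^{-1}dt=\infty$ and it remains to prove $\int_B t^{-1}dt=\infty$.

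To that end I would replace $f$ by its right-continuous modification (changing $f$ on at most countably many points, hence preserving both integrals), after which $B$ is open and decomposes as a countable disjoint union of maximal open intervals $(\alpha_i,\beta_i)$. If some $\alpha_i=0$, then $\int_B t^{-1}dt\ge\int_0^{\beta_i}t^{-1}dt=\infty$ and we are done. Otherwise every $\alpha_i>0$, being a boundary point of the open set $B$, lies in $A$, giving $f(\alpha_i)\ge\alpha_i$, and monotonicity forces $f(t)\ge\alpha_i$ on $(\alpha_i,\beta_i)$; consequently
\[
\int_{\alpha_i}^{\beta_i} f(t)^{-1}dt\le\frac{\beta_i-\alpha_i}{\alpha_i}=\rho_i-1,\qquad \rho_i:=\beta_i/\alpha_i>1,
\]
so that $\sum_i(\rho_i-1)=\infty$. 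The conclusion then follows from the elementary dichotomy: if $\rho_i\ge 2$ infinitely often then $\sum\log\rho_i\ge(\log 2)\cdot\#\{i:\rho_i\ge 2\}=\infty$; otherwise $\rho_i\in(1,2]$ eventually, on which range $\log\rho\ge(\rho-1)/\rho\ge(\rho-1)/2$ yields $\sum\log\rho_i\ge\tfrac{1}{2}\sum(\rho_i-1)=\infty$, which is exactly $\int_B t^{-1}dt$.

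For part (2) the same argument runs on $[r,\infty)$ with $h$ in place of $f$: an unbounded component $(\alpha,\infty)\subset B$ directly gives $\int_B t^{-1}dt=\infty$; the leftmost component of the form $(r,\beta)$ (if any) contributes at most $\beta-r<\infty$ to $\int_B h^{-1}dt$ because $h\ge 1$, so it may be discarded without affecting divergence; and every remaining component is treated exactly as in part (1). The main obstacle I anticipate is the bookkeeping in the second case — setting up the right-continuous modification, verifying that each $\alpha_i>0$ really lies in $A$, and handling the boundary components in part (2) — but once the structural description of $B$ is in place, the comparison between $\sum(\rho_i-1)$ and $\sum\log\rho_i$ is routine.
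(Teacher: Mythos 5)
Your proof is correct, but it takes a genuinely different route from the paper's. You decompose the bad set $B=\{t: f(t)<t\}$ into its countably many maximal intervals $(\alpha_i,\beta_i)$ (after passing to the right-continuous modification so that $B$ becomes open), bound $\int_{\alpha_i}^{\beta_i}f^{-1}\,dt\le \rho_i-1$ with $\rho_i=\beta_i/\alpha_i$ via $f\ge f(\alpha_i)\ge\alpha_i$ on each component, and then convert the divergence of $\sum_i(\rho_i-1)$ into that of $\sum_i\log\rho_i=\int_B t^{-1}\,dt$. The paper argues more directly: with $A=\{t\mid f(t)<t\}$ and $a=\inf A$, either $a>0$, in which case $f\vee t=f$ on $(0,a)$ and $\int_0^a f^{-1}\,dt=\infty$ gives the claim at once; or $a=0$, in which case one picks $t_n\in A$ with $t_{n+1}<t_n/2$, notes that $f(t)\vee t\le f(t_n)\vee t_n=t_n$ on $[t_{n+1},t_n]$, so each such block contributes at least $1/2$ to $\int(f\vee t)^{-1}\,dt$. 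In that second case the hypothesis $\int f^{-1}\,dt=\infty$ is not even needed (it is automatic), which is why the paper avoids the interval decomposition, the right-continuity modification, and the comparison of $\sum(\rho_i-1)$ with $\sum\log\rho_i$ altogether. Your argument buys nothing extra here but is sound; the only point worth stating explicitly is that $B$ is only relatively open in $(0,\eps]$ (resp.\ $[r,\infty)$), so the component touching $\eps$ (resp.\ the one touching $r$, which you do handle) may be half-open — this does not affect the left-endpoint argument. Part (2) mirrors part (1) in both treatments, with $\sup A$ in place of $\inf A$ and $t_{n+1}>2t_n$ in the paper's version.
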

\begin{proof}
\begin{enumerate}
\item Let $A=\{t\in(0,\eps]\mid f(t)<t\}$. If $A=\emptyset$, then the assertion is obvious. Suppose $A\ne\emptyset$ and let $a=\inf A$. If $a>0$, then
\[
 \int_0^\eps \frac1{f(t)\vee t}\,dt\ge \int_0^a \frac1{f(t)}\,dt=\infty.
\]
Otherwise, we can take a decreasing sequence $\{t_n\}_{n=1}^\infty$ converging to $0$ such that $t_n\in A$ and $t_{n+1}<t_n/2$ for every $n$. Then, because 
\[
f(t)\vee t\le f(t_n)\vee t_n=t_n,\quad t\in[t_{n+1},t_n],
\]
we obtain that
\begin{align*}
\int_0^\eps \frac1{f(t)\vee t}\,dt
&\ge\sum_{n=1}^\infty \int_{t_{n+1}}^{t_n} \frac1{f(t)\vee t}\,dt\\
&\ge\sum_{n=1}^\infty \int_{t_{n+1}}^{t_n} \frac1{t_n}\,dt\ge \sum_{n=1}^\infty \frac{t_n}2\cdot \frac1{t_n}=\infty.
\end{align*}
\item The proof is similar to that of (1).
Let $A=\{t\in[r,\infty]\mid h(t)<t\}$. If $A=\emptyset$, then the assertion is obvious. Suppose $A\ne\emptyset$ and let $a=\sup A$. If $a<\infty$, then
\[
 \int_a^\infty \frac1{h(t)\vee t}\,dt\ge \int_a^\infty \frac1{h(t)}\,dt=\infty.
\]
Otherwise, we can take an increasing sequence $\{t_n\}_{n=1}^\infty$ diverging to $\infty$ such that $t_n\in A$ and $t_{n+1}>2t_n$ for every $n$. Then, since 
\[
h(t)\vee t\le h(t_{n+1})\vee t_{n+1}=t_{n+1},\quad t\in[t_n,t_{n+1}],
\]
we obtain that
\begin{align*}
\int_r^\infty \frac1{h(t)\vee t}\,dt
&\ge\sum_{n=1}^\infty \int_{t_n}^{t_{n+1}} \frac1{h(t)\vee t}\,dt\\
&\ge\sum_{n=1}^\infty \int_{t_n}^{t_{n+1}} \frac1{t_{n+1}}\,dt\ge \sum_{n=1}^\infty \frac{t_{n+1}}2\cdot \frac1{t_{n+1}}=\infty.\tag*{\qed}
\end{align*}
\end{enumerate}
\end{proof}

\begin{proof}[of Theorem~\ref{thm:1}]
Let $(X,\Phi_X)$ and $(Y,\Phi_{Y})$ be two solutions to \eqref{eq:sk3} that are defined on the same filtered probability space with the same $d$-dimensional Brownian motion $\{B(t)\}_{t \ge 0}$ such that $X(0)=Y(0)$ $P$-a.s. We define
\begin{align*}
\eta(t)&=X(t)-Y(t),\\
\xi(t)&=|\eta(t)|^2, \\
e(t)&=\sg(t,\cdot,X(t))-\sg(t,\cdot,Y(t)), \\
f(t)&=b(t,\cdot,X(t))-b(t,\cdot,Y(t)).
\end{align*}
From the It\^o formula,
\begin{align}
d\xi(t)&=\langle 2 e^{\ast}(t)\eta(t),\,dB(t) \rangle+\langle 2 \eta(t), f(t) \rangle\,dt  \notag \\
&\quad+\langle 2 \eta(t),  d\Phi_{X}(t)-d\Phi_{Y}(t)\rangle+  \|e(t)\|^2\,dt. \label{eq:sm1}
\end{align}
Fix $R>0$.
By Lemma~\ref{lem:vee}~(1), $\Lambda_R(s)\vee s$ $(s \in [0,1))$ also satisfies the conditions imposed on $\Lambda_R$. Thus, we may assume  
\begin{align}\label{eq:Lambda}
\Lambda_R(s)\ge s\quad\text{for }s \in [0,1)
\end{align}
without loss of generality. For each $r>0$, we define $\phi_{r}\colon[0,1] \to \R_{+}$ by
\[
\phi_{r}(s)=\int_{0}^{s}\frac1{\Lambda_R(u)+r}\,du.
\]
Then, for any $s\in(0,\eps_0)$, we have
\begin{align}
\phi_r(s) &\nearrow \int_{0}^{s}\frac{1}{\Lambda_R(s)}\,du=\infty \text{ as }r \to 0 \label{eq:phi1}\\
\shortintertext{and}
\phi'_r(s)&=\frac{\partial \phi_r}{\partial s}(s)=\frac{1}{\Lambda_R(s)+r} \ge 0 \label{eq:phi2}.
\end{align}
Fix $r>0$. We take a concave function $\bar{\phi}_{r}\colon\R \to \R$ such that $\bar{\phi}_r(s)=\phi_{r}(s)$ for $s\in[0,\eps_0)$. For $M>0$, we define 
\begin{align*}
\tau_{R}&=\inf \{t>0 \mid |X(t)|\vee |Y(t)|\ge R\}, \\
\chi_{M,R}&=\tau_R\wg\inf \left\{t>0 \relmiddle| \begin{array}{l}\displaystyle\int_{0}^t \|e(s)\|^2\,ds \vee  |\Phi_X|_{t \wg \tau_R}\\
\displaystyle \vee |\Phi_Y|_{t \wg \tau_R}\vee\int_0^tg(s,\cdot)\,ds \ge M\end{array}\right\}.
\end{align*}
For each $\eps \in (0,\eps_0)$, we define $U_\eps$ by 
\begin{align*}
U_{\eps}&=\inf \{t>0 \mid |\xi(t)| \ge \eps\}.
\end{align*}
Fix $M>0$ and $\eps \in (0,\eps_0)$.  
To simplify the notation, we write
\[
\rho:= U_{\eps} \wg \chi_{M,R}.
\]
Applying the It\^o--Tanaka formula to $\bar{\phi}_r\circ\xi$ and  using \eqref{eq:sm1}, we obtain
\begin{align}
\bar{\phi}_r(\xi(\rho) )
&=\bar{\phi}_r(\xi(0))+2\int_{0}^{\rho}\bar{\phi}_r'(\xi(s))\langle e^{\ast}(s)\eta(s),\,dB(s) \rangle \notag \\
&\quad+ 2\int_{0}^{\rho }\bar{\phi}_r'(\xi(s)) \langle  \eta(s), f(s) \rangle\,ds  +\int_{0}^{\rho }\bar{\phi}_r'(\xi(s)) \| e(s)\|^2 \,ds  \notag \\
&\quad+2\int_{0}^{\rho}\bar{\phi}_r'(\xi(s)) \langle  \eta(s),  d\Phi_{X}(s)-d\Phi_{Y}(s)\rangle
+\frac12 \int_{\R} L_{\rho }^{a} \bar{\phi}_{r}''(da),  \label{eq:sm2}
\end{align}
where $\{L_t^a\}_{t \ge 0}$ denotes the local time at $a$ of the semimartingale $\{\xi(t)\}_{t \ge 0}$, and $\bar{\phi}_{r}''$ the second derivative of $\bar{\phi}_r$ in the sense of distribution. Because $\bar{\phi}_r$ is a concave function, 
the last term of \eqref{eq:sm2} is nonpositive.
For $P$-a.s., $\xi(0)=0$ and $ \bar{\phi}_r(\xi(0))=0$.  
For any $s \in [0,U_{\eps}]$, we see from \eqref{eq:cond} that
\begin{align*}
2 \langle  \eta(s), f(s) \rangle+ \| e(s)\|^2 &\le g(s,\cdot) \Lambda_R(\xi(s)). 
\end{align*}
Therefore, the sum of the third and the fourth terms of \eqref{eq:sm2} is dominated by 
\[
\int_{0}^{\rho }\frac{g(s,\cdot)\Lambda_R(\xi(s))}{\Lambda_R(\xi(s))+r}  \,ds
\le \int_{0}^{\rho }g(s,\cdot)\,ds\le M.
\]
By Remark~\ref{rem:1}, it holds that
\begin{align*}
\frac{1}{2r_0}|Y(s)-X(s)|^2 \ge \langle X(s)-Y(s),\n(s) \rangle
\end{align*}
if $X(s) \in \partial D$. 
From \eqref{eq:lt0} and \eqref{eq:lt2}, we have
\begin{align}
 \langle X(s)-Y(s),d\Phi_X(s) \rangle &= \langle X(s)-Y(s),\n(s) \rangle\,d|\Phi_{X}|_s  \notag \\
 &\le \frac{1}{2r_0}|Y(s)-X(s)|^2\, d|\Phi_X|_s. \label{eq:lt3}
\end{align}
By exchanging the roles of $X$ and $Y$, we have 
\begin{align}
 \langle Y(s)-X(s),d\Phi_Y(s) \rangle &\le \frac{1}{2r_0}|X(s)-Y(s)|^2\,d|\Phi_Y|_s. \label{eq:lt4}
\end{align}
From \eqref{eq:lt3}, \eqref{eq:lt4}, \eqref{eq:phi2}, and \eqref{eq:Lambda}, it follows that for any $t \ge 0$,
\begin{align*}
&\int_{0}^{\rho }\bar{\phi}_r'(\xi(s)) \langle  \eta(s),  d\Phi_{X}(s)-d\Phi_{Y}(s)\rangle \\
&\le  \frac{1}{2r_0}\int_{0}^{\rho}\bar{\phi}_r'(\xi(s))|X(s)-Y(s)|^2\,d(|\Phi_X|_s+|\Phi_Y|_s)  \\
&= \frac{1}{2r_0}\int_{0}^{\rho}\frac{\xi(s)}{\Lambda_{R}(\xi(s))+r}\,d(|\Phi_X|_s+|\Phi_Y|_s)  \\
&\le \frac{1}{2r_0} (|\Phi_X|_{\rho}+|\Phi_Y|_{\rho})
\le M/r_0.
\end{align*}
Combining these estimates, we get
\begin{align}
\bar{\phi}_r(\xi(\rho ))
\le 2\int_{0}^{\rho }\bar{\phi}_r'(\xi(s))\langle e^{\ast}(s)\eta(s),dB(s) \rangle
+M+\frac{2M}{r_0}.
\label{eq:sm3}
\end{align}
For any $s \in [0,\rho]$, we have 
\[
|\bar{\phi}_r'(\xi(s))e^{\ast}(s)\eta(s)|^2 \le |\bar{\phi}_r'(\xi(s))|^2 \|e(s)\|^2 |\xi(s)| \le \frac1r \|e(s)\|^2. \]
Therefore, we see that 
$\left\{\int_{0}^{t\wg\rho}\bar{\phi}_r'(\xi(s))\langle e^{\ast}(s)\eta(s),dB(s) \rangle\right\}_{t\in[0,\infty]}$
is a martingale. Then, taking the expectations of both sides of \eqref{eq:sm3}, we obtain
\[
E[\bar{\phi}_r(\xi(\rho))]\le  M   +\frac{2M}{r_0}.
\]
From the monotone convergence theorem, 
\[
E\left[  \int_{0}^{\xi(\rho)}\frac{1}{\Lambda_R(s)}\,ds \right]=\lim_{r \to 0}E[\bar{\phi}_r(\xi(\rho))]
 \le M   +\frac{2M}{r_0} <\infty.
\]
In view of \eqref{eq:zt} with $\Lambda$ replaced by $\Lambda_R$, we obtain that
$\xi(\rho)=0$ $P$-a.s.
Therefore,
\begin{align*}
0=E[\xi(\rho)]
&\ge E[\xi(U_{\eps} \wg \chi_{M,R} ):U_{\eps}<\chi_{M,R}  ] \\
&= \eps P(U_{\eps}<\chi_{M,R}),
\end{align*}
which implies that 
\[
U_{\eps}\ge \chi_{M,R} \quad\text{$P$-a.s.}
\]
By letting $\eps\downarrow0$, then $M\to\infty$, we see from \eqref{eq:m} and \eqref{eq:g} that
\[
X(t)=Y(t) \text{ for }t< \tau_{R},\quad P\text{-a.s.} 
\]
for any $R>0$.
This, in particular, implies that the lifetimes of $X$ and $Y$ are the same $P$-a.s. 
Accordingly, we have
$P(X(t)=Y(t),\,   t \ge 0)=1$.\qed
\end{proof}

\begin{proof}[of Corollary~\protect\ref{cor:1}] 
We fix $n \in \N$.
Define $1$-Lipschitz functions $u_n\colon \cl{D} \to \R$ and $v_n\colon \R_{+} \to \R$ as
\begin{align*}
u_n(x)&=0\vee (n+1-|x|)\wg 1,\quad x\in\cl{D},\\
v_n(t)&=0\vee (n+1-t)\wg 1,\quad t\in\R_+.
\end{align*}
We define functions $\sg_n$ and $b_n$ on $\R_{+} \times \Omega \times \cl{D}$ as 
\begin{align*}
\sg_n(t,\om,x)&=\sg(t,\om,x)u_n(x) v_n(t),\\
\quad b_n(t,\om,x)&=b(t,\om,x)u_n(x)v_n(t).
\end{align*}
For $P$-a.s.\ $\om \in \Omega$, the maps $\R_{+} \times \cl{D} \ni (t,x) \mapsto \sg_n(t,\om,x)$ and $\R_{+} \times \cl{D} \ni (t,x) \mapsto b_n(t,\om,x)$ are bounded continuous. 
We fix $R>0$ and set
\begin{align*}
K_{n,R}=\esssup_{\om \in \Omega }\sup_{t \in [0,n+1],\ z\in \cl{D} \cap B(R)} \{ \| \sg(t,\om,z)\| \vee |b(t,\om,z)|\}<\infty.
\end{align*}
It follows that for each $(t,\om) \in \R_{+} \times \Omega$ and $x,y \in \cl{D} \cap B(R)$,
\begin{align*}
\|\sg_n(t,\om,x)-\sg_n(t,\om,y)\| &\le u_n(x)v_n(t)\|\sg(t,\om,x)-\sg(t,\om,y)\| \\
&\quad +  \| (u_n(x)-u_n(y))v_n(t)\sg(t,\om,y)\| \\
&\le \|\sg(t,\om,x)-\sg(t,\om,y)\|  +K_{n,R} |x-y|
\end{align*}
and
\begin{align*}
\langle x-y ,b_n(t,\om,x)-b_n(t,\om,y)\rangle &=u_n(x)v_n(t)\langle x-y ,b(t,\om,x)-b(t,\om,y)\rangle \\
&\quad +\langle x-y ,v_n(t)b(t,\om,y)(u_n(x)-u_n(y))\rangle \\
&\le u_n(x)v_n(t) \langle x-y ,b(t,\om,x)-b(t,\om,y)\rangle \\
&\quad+K_{n,R} |x-y|^2.
\end{align*}
From the assumption, there exists a Borel measurable function $\Lambda_R :[0,1) \to \R_{+}$ satisfying {\bf (L)} such that  for $P$-a.s.\,$\omega$,
\begin{align*}
&\|\sg(t,\om,x)-\sg(t,\om,y)\|^2\le g(t,\omega) \Lambda_R(|x-y|^2) 
\end{align*}
and 
\begin{align*}
&\|\sg(t,\om,x)-\sg(t,\om,y)\|^2+2\langle x-y ,b(t,\om,x)-b(t,\om,y)\rangle \\
&\le g(t,\omega) \Lambda_R(|x-y|^2) 
\end{align*}
for any $t \ge 0$ and $x,y \in \cl{D} \cap B(R)$ with $|x-y|<1$. 
From these estimates, we see that for $P$-a.s.\,$\omega$ and such $t,x,y$,
\begin{align*}
&\|\sg_n(t,\om,x)-\sg_n(t,\om,y)\|^2 +
2\langle x-y ,b_n(t,\om,x)-b_n(t,\om,y)\rangle  \\
&\le   2\|\sg(t,\om,x)-\sg(t,\om,y)\|^2  +2K_{n,R}^2 |x-y|^2 \\
&\quad  + 2u_n(x)v_n(t) \langle x-y ,b(t,\om,x)-b(t,\om,y)\rangle +2K_{n,R} |x-y|^2 \\
&\le  (2-u_n(x)v_n(t))g(t,\omega) \Lambda_R(|x-y|^2) + u_n(x)v_n(t)g(t,\omega) \Lambda_R(|x-y|^2) \\
&\quad +(2K_{n,R}^2+2K_n) |x-y|^2\\
&= 2 g(t,\omega) \Lambda_R(|x-y|^2)+(2K_{n,R}^2+2K_{n,R}) |x-y|^2.
\end{align*}
By Lemma~\ref{lem:vee}(1), we may assume that $\Lambda_R(s) \ge s$ for any $s\in[0,1]$. Therefore, we see that $\sg_n$ and $b_n$ satisfy \eqref{eq:cond}. Then, by Theorem~\ref{thm:1} and Remarks~\ref{rem:yw} and~\ref{rem:def}(2), the equation 
 \begin{align}
X_n(t)&=X(0)+\int_{0}^t\sg_n(s,\cdot,X_n(s))\,dB(s)+\int_{0}^t b_n(s,\cdot,X_n(s))\,ds \notag \\
&\quad+\Phi_{X_n}(t), \quad t \ge 0 \label{eq:n}
\end{align}
has a strong solution. That is, \eqref{eq:n} has a solution for a given $d$-dimensional Brownian motion. We define $\tau_n:=\inf\{t>0 \mid X_n(t) \notin \cl{D} \cap B(n)\}.$ Then, by the definition of $\sg_n$ and $b_n$, we see that $X_n=\{X_n(t)\}_{t \ge 0}$ solves \eqref{eq:sk3} up to $n \wg \tau_n$.
For $m>n$, we have $X_m(t)=X_n(t)$, $t \in [0,n \wg \tau_n]$, by uniqueness. Then, we can define $X=\{X(t)\}_{t \ge 0}$ as $X(t)=X_n(t)$ for $t\le n \wg \tau_n$ and $n \in \N$. It is easy to see that $X$ is a solution of \eqref{eq:sk3} and $\zt_X=\lim_{n \to \infty}n \wg \tau_n$, by letting $X(t)=\Delta$ for $t\in[\zt_X,\infty)$.
  \qed
\end{proof}

\begin{proof}[of Theorem~\protect\ref{thm:3}] 
 We define $\hat{\xi}=\{\hat{\xi}(s)\}_{s \ge 0}$ by
\begin{align*}
\hat{\xi}(s)=V(s,X(s)),\quad s \ge 0.
\end{align*}
It follows from the It\^o formula that  
\begin{align}
d \hat{\xi}(s)
&= \langle  \sigma^{\ast}(s,\cdot,X(s))(\nabla V)(s, X(s)),\,dB(s) \rangle \nonumber \\
&\quad+ \langle   (\nabla V)( s,X( s)), b(s,\cdot,X(s)) \rangle\,ds +\langle (\nabla V)( s,X(s)),  d\Phi_{X}(s)\rangle  \nonumber \\
&\quad+ \frac12 \|\sigma(s,\cdot,X(s))\|^2 (\Delta V)(s,X(s))\,ds+ \frac{\partial V}{\partial s}(s,X(s))\,ds,\label{eq:eql}
\end{align}
where $\nabla$ and $\Delta$ are differentiations with respect to the second variable.
For $R>0$ and $M>0$, we set
\begin{align*}
\tau_{R}&=\inf\{s>0 \mid |X(s)| \ge R\},\\
\chi_{M}&=\inf \left\{s>0 \relmiddle| \int_0^sg(u,\cdot)\,du \ge M\right\},
\end{align*}
and define $\psi\colon\R_{+} \to \R_{+}$ as
\[
\psi(s)=\int_{0}^{s}\frac{1}{\gamma(u)}\,du,\quad s  \in \R_+.
\]
For now, we fix $t>0$, $R>0$, and $M>0$, and write
\[
\rho= t \wg \tau_{R} \wg \chi_M.
\]
Since $\gamma$ is a non-decreasing function on $\R_{+}$, $\psi$ can extend to a concave function on $\R$, which is denoted as the same symbol.
By applying the It\^o--Tanaka formula to $\psi\circ\hat\xi$ and using \eqref{eq:eql}, we have 
\begin{align}
&\psi(\hat{\xi}(\rho))=\psi(\hat \xi(0))+\int_{0}^{\rho }\psi'(\hat{\xi}(s))\,d \hat{\xi}(s)+\frac{1}{2} \int_{\R} \hat{L}_{\rho }^{a} \psi_{r}''(da) \notag  \\
&=\psi(V(0,X(0))) +\int_{0}^{\rho  }\psi'(\hat{\xi}(s))\langle  \sigma^{\ast}(s,\cdot,X(s))(\nabla V)( s,X(s)),\,dB(s) \rangle \notag \\
&\quad +\int_{0}^{\rho  } \psi'(\hat{\xi}(s))  \langle   (\nabla V)(s,X(s)), b(s,\cdot,X(s)) \rangle  \,ds\notag\\
&\quad +\int_{0}^{\rho  } \psi'(\hat{\xi}(s))  \langle   (\nabla V)(s, X(s)), d\Phi_{X}(s) \rangle\notag\\
&\quad +\frac12\int_{0}^{\rho   } \psi'(\hat{\xi}(s))\|\sigma(s,\cdot, X(s))\|^2 (\Delta V)(s,X(s))\,ds \notag  \\
&\quad  +\int_{0}^{\rho  } \psi'(\hat{\xi}(s)) \frac{\partial V}{\partial s}(s,X(s))\,ds
+\frac{1}{2} \int_{\R} \hat{L}_{\rho  }^{a}\psi''(da). \label{eq:eql2}
\end{align}
Here, $\{\hat{L}_s^a\}_{s \ge 0}$ denotes the local time at $a$ of the semimartingale $\{\hat{\xi}(s)\}_{s \ge 0}$, and $\psi''$ the second derivative of $\psi$ in the sense of distribution. Because $\psi$ is a concave function, it follows that 
\begin{align*}
 \int_{\R} \hat{L}_{\rho }^{a} \psi''(da) \le 0.
\end{align*}
From (V.3), the sum of the third, the fifth, and the sixth terms of \eqref{eq:eql2} is dominated by
\[
\int_{0}^{\rho  } \frac{g(s,\omega) \gamma(\hat \xi(s)) }{2\gamma(\hat{\xi}(s))}\, ds\le \frac{M}2. 
\]
Since $d\Phi_{X}(s)=\n(s)\,d|\Phi_{X}|_s$, where $\n(s) \in \cN_{X(s)}$ if $X(s) \in \partial D$, it follows from (V.2) that
the fourth term of \eqref{eq:eql2} is equal to
\[
\int_{0}^{\rho  } \psi'(\hat{\xi}(s))  \langle   (\nabla V)(s,X(s)), \n (s)\rangle\,d|\Phi_X|(s) \le 0.
\]
By using these estimates, we obtain
\begin{align}
\psi(\hat{\xi}(\rho)) &\le \psi(V(0,X(0)))+\int_{0}^{\rho }\psi'(\hat{\xi}(s))\langle  \sigma^{\ast}(s,\cdot,X(s))(\nabla V)(s,X(s)),\,dB(s)  \rangle\notag\\
&\quad+M/2.\label{eq:eql6}
\end{align} 
We define a local martingale $S=\{S(t)\}_{t \ge 0}$ as 
\[
S(t)=\int_{0}^{t \wg \tau_{R} \wg \chi_M}\psi'(\hat{\xi}(s))\langle  \sigma^{\ast}(s,\cdot,X(s))(\nabla V)(s,X(s)),\,dB(s)  \rangle,\quad t \ge 0.
\]
There exists an increasing sequence of stopping times $\{\theta_n\}_{n=1}^\infty$ such that $\lim_{n \to \infty}\theta_n= \infty$ $P$-a.s.\ and $\{S(t \wg \theta_n)\}_{t \ge 0}$ is a martingale for each $n \in \N$. Notice that \eqref{eq:eql6} is valid if we replace $\rho$ by $\rho \wg \theta_n$, $n \in \N$. Then, by Fatou's lemma, for any $t \ge 0$, $r>0$, $R>0$, and $M >0$,
\begin{align*}
&E[ \psi(\hat{\xi}(t \wg \tau_{R} \wg \chi_M)) :|X(0)|<r] \\
&\le \varliminf_{n \to \infty}E[ \psi(\hat{\xi}(t \wg \tau_{R} \wg \chi_M \wg \theta_n)) :|X(0)|<r] \\
&\le E[\psi(V(0,X(0))):|X(0)|<r] + \varliminf_{n \to \infty} E \left[ S(t \wg \theta_n):|X(0)|<r\right] +M/2 \\
&\le \sup_{x \in B(r)}V(0,x)+0+M/2.
\end{align*} 
Therefore, we have
\begin{equation}
E\biggl[\int_{0}^{\hat{\xi}(\tau_{R})}\frac{1}{\gamma(s)}\,ds : \zt_X \le t \wg \chi_M,\ |X(0)|<r\biggr]<\infty. \label{eq:eql8}
\end{equation} 
From (V.1),
\[
\lim_{R\to\infty}\hat\xi(\tau_R)=\infty \quad P\text{-a.s.\ on }\{\zt_X<\infty\}.
\]
Therefore, \eqref{eq:eql8} implies  that \[P(\zt_X \le t \wg \chi_M,\ |X(0)|<r)=0\] for any $t \ge 0$, $M>0$, and $r>0$.  
In view of \eqref{eq:g} and the fact that $X(0) \in \cl{D}$ $P$-a.s., we arrive at the conclusion. \qed
\end{proof}

\section{Proof of Theorem~\ref{thm:2}}
For a continuous functions $w\colon \R^d \to \R$, $s,t \in \R_{+}$ with $s<t$, and for $\theta \in (0,1]$, we define
\begin{align*}
\Delta_{s,t}(w)&= \sup_{s \le t_1<t_2 \le t}|w(t_2)-w(t_1)|,\\
\|w\|_{\mathcal{H},[s,t], \theta}&=\sup_{s \le t_1<t_2 \le t}\frac{|w(t_2)-w(t_1)|}{|t_2-t_1|^\theta}, \\
|w|_{t}^s&=\sup_{\Pi}\sum_{k=1}^{N}|w(t_k)-w(t_{k-1})|,
\end{align*}
where $\Pi=\{s=t_0<t_1<\cdots<t_{N}=t\}$ is a partition of the interval $[s,t]$.

Let $\{X(t)\}_{t\ge0}$ be a solution of \eqref{eq:sk3} with a Brownian motion $\{B(t)\}_{t\ge0}$. Define $\{W(t)\}_{0\le t<\zt_X}$ as
\[
W(t)=X(0)+ \int_{0}^{t}\sg(s,\cdot,X(s))\,dB(s)+\int_{0}^{t}b(s,\cdot,X(s))\,ds,\quad t <\zt_X.
\]
The following lemma is a slight modification of \cite[Lemma~2.3]{AS}, originally due to \cite[Proposition~3.1]{S}. The proof is the same as that of \cite[Lemma~2.3]{AS}.
\begin{lemma}\label{lem:AS}
Let $T>0$ and $\theta\in(0,1]$. There exist positive constants $C_1,C_2$ depending only on $\theta$, and $r_0$, $\beta$, $\dl$ in assumptions {\bf(A)} and {\bf(B)} such that, for $P$-a.s.,
\begin{align*}
|X|_{s}^{t}
&\le  C_1 (1+(t-s)\|W\|_{\mathcal{H},[s,t],\theta}^{1/\theta})e^{C_2\Delta_{s,t}(W)}\Delta_{s,t}(W),
\quad  0\le s<t< T\wg\zt_X.
\end{align*}
\end{lemma}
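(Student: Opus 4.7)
The plan is to carry out the pathwise Skorohod-problem estimate of Saisho~\cite[Proposition~3.1]{S} (reformulated in \cite[Lemma~2.3]{AS}) on the window $[s,t]$ rather than on $[0,T]$. Because $W$ is continuous on $[0,T\wg\zt_X)$ almost surely and the Skorohod conditions \eqref{eq:lt1}--\eqref{eq:lt0} are translation-invariant in time, the argument transports verbatim, provided one tracks the constants carefully so that only $\theta,r_0,\beta,\dl$ enter.

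First, I would fix a threshold $\eta_0\in(0,\dl)$ that depends only on $r_0$ and $\beta$, and use the Hölder bound $|W(u)-W(v)|\le\|W\|_{\mathcal{H},[s,t],\theta}|u-v|^\theta$ to construct a partition $s=t_0<t_1<\cdots<t_N=t$ with $\Delta_{t_{k-1},t_k}(W)\le\eta_0$ for each $k$ and
\[
 N\le 1+(t-s)\bigl(\|W\|_{\mathcal{H},[s,t],\theta}/\eta_0\bigr)^{1/\theta}.
\]
On each subinterval $[t_{k-1},t_k]$, if $X$ visits the boundary at some $\tau_k$, condition~{\bf (B)} provides a unit vector $\bone_{X(\tau_k)}$ pairing with every inward normal at boundary points that lie in $B(X(\tau_k),\dl)$ by at least $1/\beta$. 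Testing this vector against $d\Phi_X(u)=\n(u)\,d|\Phi_X|_u$ and using $\Phi_X=X-W$ delivers a local inequality of the form
\[
 |\Phi_X|_{t_{k-1}}^{t_k}\le \beta\bigl(|X(t_k)-X(t_{k-1})|+\Delta_{t_{k-1},t_k}(W)\bigr);
\]
condition~{\bf (A)} together with Remark~\ref{rem:1} (which absorbs the non-convexity into a quadratic error of order $1/(2r_0)$) can then be used to re-express $|X(t_k)-X(t_{k-1})|$ as $\Delta_{t_{k-1},t_k}(W)$ plus a term proportional to $\Delta_{t_{k-1},t_k}(W)\cdot|\Phi_X|_{t_{k-1}}^{t_k}$. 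For $\eta_0$ small enough, this is self-improving and yields a clean local bound $|\Phi_X|_{t_{k-1}}^{t_k}\le C(r_0,\beta)\Delta_{t_{k-1},t_k}(W)$.

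Summing over $k=1,\dots,N$ and running a discrete Gronwall iteration that converts the accumulated multiplicative remainders into an exponential factor $e^{C_2\Delta_{s,t}(W)}$ gives the desired estimate once the bound on $N$ is inserted and the power $\eta_0^{-1/\theta}$ is absorbed into $C_1$.

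The main technical obstacle is the bootstrap inside the local step: for the uniform direction from {\bf (B)} to apply on the whole interval $[t_{k-1},t_k]$, the path $X$ must remain inside $B(X(\tau_k),\dl)$, which in turn requires $|\Phi_X|_{t_{k-1}}^{t_k}+\Delta_{t_{k-1},t_k}(W)<\dl$; yet $|\Phi_X|_{t_{k-1}}^{t_k}$ is precisely the quantity one is trying to bound. Calibrating $\eta_0$ in terms of $\dl,r_0,\beta$ to close this loop, while keeping $C_1,C_2$ dependent only on $\theta,r_0,\beta,\dl$, is the delicate bookkeeping at the heart of the Saisho/\cite{AS} argument, and is exactly what the present lemma inherits without further modification.
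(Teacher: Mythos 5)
Your outline is essentially the argument the paper relies on: the paper gives no independent proof, stating only that the proof is the same as that of \cite[Lemma~2.3]{AS} (going back to \cite[Proposition~3.1]{S}), and that argument is precisely the partition-into-small-oscillation-windows, the local variation bound for $\Phi_X$ via the uniform vector $\bone_x$ from {\bf (B)}, the bootstrap via {\bf (A)} and Remark~\ref{rem:1}, and the discrete Gronwall step producing the factor $e^{C_2\Delta_{s,t}(W)}$, with the constants (cf.\ the remark after the lemma) matching your calibration of $\eta_0$ in terms of $\dl$ and $\beta$. So your proposal is correct and takes essentially the same route as the cited proof.
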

\begin{remark}
Following the proof of \cite[Lemma~2.3]{AS}, we can take 
\begin{align*}
C_1 &= 24\beta (1+\beta) \bigl[ \{ 4\dl^{-1}(\beta+2)\}^{1/\theta}+1 \bigr]\exp\{\beta \dl (1+\dl^{-1})r_0^{-1}\}\\
\shortintertext{and}
C_2 &= (1+\dl^{-1})\beta r_0^{-1}.
\end{align*}
However, we do not use such specific quantities below.
\end{remark}
In what follows, we suppose that the assumptions in Theorem~\ref{thm:2} are satisfied.
We introduce some stopping times and random integers. 
We fix $T,M \in (0,\infty)$ and set
 \begin{align*}
 \kappa=\inf\left\{t>0 \relmiddle| \int_{0}^t g(s,\cdot)\,ds \ge M \right\} \wg T.
 \end{align*}
For $R \in (0,\infty)$, we set
\begin{align*}
\kappa_R=\inf\{t>0 \mid |X(t)| \ge R \} \wg \kappa
\end{align*} 
and define 
 \[
 \kappa_\infty=\lim_{R\to\infty}\kappa_R\,(\,=\zt_X\wg\kappa).
 \]
We further define subsets $\{U_n\}_{n=0}^\infty$ and $\{V_n\}_{n=0}^\infty$ of $\cl{D}$ as
\begin{gather*}
U_n=B(x_n, \hat{\beta}\dl_n),\quad V_n=B(x_n,\dl_n), \quad n \ge 1, \\
\shortintertext{and}
U_0=\cl{D}\setminus \bigcup_{n=1}^\infty \cl{B}(x_n,\hat\beta \dl_n/2),\quad
V_0=\cl{D}\setminus \bigcup_{n=1}^\infty \cl{B}(x_n,\hat\beta \dl_n/3).
\end{gather*}
Note that $\cl{D}\subset \bigcup_{n=0}^\infty U_n$.
For $R \in \N \cup \{ \infty\}$, we define stopping times $\{\tau_k^{(R)}\}_{k=0}^\infty$ and random sequences $\{n_k^{(R)}\}_{k=0}^\infty$ as 
\begin{align*}
\tau_{0}^{(R)}&=0,\quad n_0^{(R)}=\inf\{ n \ge 0 \mid X(\tau_{0}^{(R)}) \in U_n \},
\end{align*}
and for $k\ge0$,
\begin{align*}
\tau_{k+1}^{(R)}&=\inf\{t> \tau_{k}^{(R)} \mid X(t) \notin V_{n^{(R)}_k} \} \wg \kappa_R,  \\
n_{k+1}^{(R)}&=\begin{cases}
 \inf\{ n \ge 0 \mid X(\tau_{k+1}^{(R)}) \in U_n \}&  \text{if}\quad \tau_{k+1}^{(R)}<\kappa_R,\\
\infty &  \text{if} \quad \tau_{k+1}^{(R)}=\kappa_R.
\end{cases}
\end{align*}
Let 
\begin{align*}
\Gamma_0&=\{k\ge0\mid n_k^{(\infty)}=0\}=\{k\ge0\mid n_k^{(\infty)}=0\text{ and } \tau_{k}^{(\infty)}<\kappa_{\infty}\},\\
\Gamma_1&=\{k\ge0\mid n_k^{(\infty)} \in \N\}=\{k\ge0\mid n_k^{(\infty)} \in \N \text{ and } \tau_{k}^{(\infty)}<\kappa_{\infty}\},\\
\shortintertext{and}
\Sigma&=\{k\ge0\mid n_k^{(\infty)} \in \N \text{ and } \tau_{k+1}^{(\infty)}<\kappa_{\infty}\}.
\end{align*}
\begin{lemma}\label{lem:cons1}
If $\#\Sigma$ is finite $P$-a.s., then both $\#\Gamma_0$ and $\#\Gamma_1$ are finite $P$-a.s.
\end{lemma}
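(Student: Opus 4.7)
The plan has two steps: a combinatorial bound $\#\Gamma_1\le\#\Sigma+1$, and a geometric reduction showing $\#\Gamma_0\le\#\Gamma_1+1$. Both arguments are pathwise. The first will come from monotonicity of $\{\tau_k^{(\infty)}\}$, and the second will use the uniform lower bound $\dl_n\ge\hat\dl$ supplied by assumption~(2) of Theorem~\ref{thm:2}.

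For $\Gamma_1$, I would examine $\Gamma_1\setminus\Sigma=\{k\mid n_k^{(\infty)}\in\N,\ \tau_k^{(\infty)}<\kappa_\infty,\ \tau_{k+1}^{(\infty)}=\kappa_\infty\}$. If $k_1<k_2$ both lie in this set, then since $\{\tau_j^{(\infty)}\}$ is non-decreasing we have $\tau_{k_2}^{(\infty)}\ge\tau_{k_1+1}^{(\infty)}=\kappa_\infty$, contradicting $\tau_{k_2}^{(\infty)}<\kappa_\infty$; hence $\#(\Gamma_1\setminus\Sigma)\le 1$ and $\#\Gamma_1\le\#\Sigma+1$. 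For $\Gamma_0$, set $\Gamma_0':=\{k\in\Gamma_0\mid\tau_{k+1}^{(\infty)}<\kappa_\infty\}$; by the same monotonicity argument $\#(\Gamma_0\setminus\Gamma_0')\le 1$, so it would suffice to verify that the (trivially injective) map $k\mapsto k+1$ sends $\Gamma_0'$ into $\Gamma_1$, which would give $\#\Gamma_0\le\#\Gamma_1+1\le\#\Sigma+2$.

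The remaining geometric claim is that for $k\in\Gamma_0'$, writing $t^*:=\tau_{k+1}^{(\infty)}$, one has $n_{k+1}^{(\infty)}\ne 0$, equivalently $X(t^*)\in\cl{B}(x_m,\hat\beta\dl_m/2)$ for some $m\ge 1$. If $X(t^*)\notin V_0$ this is immediate because then $X(t^*)\in\cl{B}(x_m,\hat\beta\dl_m/3)$ for some $m\ge 1$. Otherwise $X(t^*)\in V_0$, so the infimum defining $t^*$ must be approached strictly from above: there exist $s_j>t^*$ with $s_j\downarrow t^*$ and $X(s_j)\in\cl{B}(x_{n_j},\hat\beta\dl_{n_j}/3)$ for some $n_j\ge 1$. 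Setting $\epsilon_j:=|X(s_j)-X(t^*)|$, continuity of $X$ at $t^*$ gives $\epsilon_j\to 0$, and the triangle inequality yields
\[
|X(t^*)-x_{n_j}|\le\epsilon_j+\hat\beta\dl_{n_j}/3.
\]
For $j$ large enough that $\epsilon_j<\hat\beta\hat\dl/6\le\hat\beta\dl_{n_j}/6$, this produces $|X(t^*)-x_{n_j}|<\hat\beta\dl_{n_j}/2$, completing the claim. The main subtlety lies precisely in this last step: the uniform lower bound $\dl_n\ge\hat\dl$ from assumption~(2) of Theorem~\ref{thm:2} is essential, since otherwise $\hat\beta\dl_{n_j}/6$ could tend to $0$ along the sequence $\{n_j\}$ and the comparison would collapse.
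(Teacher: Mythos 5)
Your proof is correct and follows the same two-step scheme as the paper's: $\#\Gamma_1\le\#\Sigma+1$ from monotonicity of $\{\tau_k^{(\infty)}\}$, and $\#\Gamma_0\le\#\Gamma_1+1$ via the injection $k\mapsto k+1$ together with the claim that $n_k^{(\infty)}=0$ forces $n_{k+1}^{(\infty)}\neq 0$. The only difference is that the paper dispatches this last implication ``from the definition'' in one line, whereas you verify it carefully --- correctly using $\dl_n\ge\hat\dl$ to show that the closure of $\cl{D}\setminus V_0$ is disjoint from $U_0$, a point that does require an argument since $\cl{D}\setminus V_0$ is only a countable union of closed balls and the exit position need not lie in it.
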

\begin{proof}

Because $\tau_{k}^{(\infty)}$ is nondecreasing in $k$, there is at most one $k$ (depending on $\om\in\Omega$) such that $\tau_{k}^{(\infty)}<\kappa_{\infty}$ and $\tau_{k+1}^{(\infty)}=\kappa_{\infty}$. Therefore, $\#\Gamma_1\le\#\Sigma+1$ $P$-a.s.
Moreover, if $n_k^{(\infty)}=0$, then $n_{k+1}^{(\infty)}\in\N\cup\{\infty\}$ from the definition of $n_k^{(\infty)}$. Because there is at most one $k$ such that $n_k^{(\infty)}=0$ and $n_{k+1}^{(\infty)}=\infty$, we have $\#\Gamma_0\le \#\Gamma_1+1$ $P$-a.s. 
This completes the proof.
\qed
\end{proof}
\begin{lemma}\label{lem:cons3}
$\#\Sigma$ is finite $P$-a.s.
\end{lemma}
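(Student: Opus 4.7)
The plan is to argue by contradiction. Suppose $P(\#\Sigma=\infty)>0$. Because $\{\tau_k^{(\infty)}\}_k$ is nondecreasing and bounded above by $\kappa_\infty\le T$, one has $\sum_{k\in\Sigma}\Delta\tau_k\le T$ where $\Delta\tau_k:=\tau_{k+1}^{(\infty)}-\tau_k^{(\infty)}$, and in particular $\Delta\tau_k\to0$ along $\Sigma$ on this event. The first observation is the geometric lower bound: for each $k\in\Sigma$ with $n:=n_k^{(\infty)}\in\N$, the condition $\tau_{k+1}^{(\infty)}<\kappa_\infty\le\zt_X$ together with continuity of $X$ and the definitions of $U_n$, $V_n$ yield $|X(\tau_k^{(\infty)})-x_n|\le\hat\beta\dl_n$ and $|X(\tau_{k+1}^{(\infty)})-x_n|=\dl_n$, so
\[
|X(\tau_{k+1}^{(\infty)})-X(\tau_k^{(\infty)})|\ge(1-\hat\beta)\dl_n\ge(1-\hat\beta)\hat\dl.
\]
Moreover, since $X(s)\in V_n$ for $s\in[\tau_k^{(\infty)},\tau_{k+1}^{(\infty)})$, assumption~\eqref{eq:thm2asmp} forces $\|\sg(s,\cdot,X(s))\|^2\vee|b(s,\cdot,X(s))|^2\le C\dl_n^\nu$ on this interval.

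Next I apply Lemma~\ref{lem:AS} on $[\tau_k^{(\infty)},\tau_{k+1}^{(\infty)}]$ with a fixed $\theta\in(0,1/2)$ to obtain
\[
(1-\hat\beta)\dl_n\le |X|_{\tau_k^{(\infty)}}^{\tau_{k+1}^{(\infty)}}\le C_1\bigl(1+\Delta\tau_k B_k^{1/\theta}\bigr)e^{C_2 A_k}A_k,
\]
where $A_k:=\Delta_{\tau_k^{(\infty)},\tau_{k+1}^{(\infty)}}(W)$ and $B_k:=\|W\|_{\mathcal{H},[\tau_k^{(\infty)},\tau_{k+1}^{(\infty)}],\theta}$. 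By Dambis--Dubins--Schwarz, the martingale part of $W$ is a time-changed Brownian motion whose quadratic variation grows by at most $C\dl_n^\nu\,ds$ on this interval; combining Lévy's modulus of continuity with a stopping argument at $\kappa_R$ (and letting $R\to\infty$), I expect to produce bounds of the form $A_k\le C'(\om)\dl_n^{\nu/2}\sqrt{\Delta\tau_k\log(1/\Delta\tau_k)}$ and $B_k\le C'(\om)\dl_n^{\nu/2}\Delta\tau_k^{1/2-\theta}\log^{1/2}(1/\Delta\tau_k)$, with a random $C'(\om)<\infty$ uniform in $k\in\Sigma$. Substituting these into the inequality above and using $\nu<1$ together with $\dl_n\ge\hat\dl$, a case analysis on whether $\Delta\tau_k\dl_n^{2\nu}$ is $\le 1$ or not reduces the inequality to $\Delta\tau_k\log(1/\Delta\tau_k)\ge c(\om)>0$, which in turn forces $\Delta\tau_k\ge c'(\om)>0$ uniformly over $k\in\Sigma$, contradicting $\sum_{k\in\Sigma}\Delta\tau_k\le T$.

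The main obstacle is the uniform-in-$k$ control of the local Hölder norm $B_k$ and oscillation $A_k$, which is complicated by the fact that the local coefficient bound $\sqrt{C\dl_n^\nu}$ varies with $k$ and that $\{\dl_{n_k}\}$ may be unbounded along $\Sigma$ (as is possible under the setup of Example~\ref{ex:exc2}(2)). This must be handled either by a careful localization at $\kappa_R$ combined with a Borel--Cantelli argument indexed by the countable family $\{V_n\}_{n\ge1}$, or by separately treating the case $\dl_{n_k}\to\infty$ via Gaussian tail bounds for the time-changed Brownian motion that rule out rapid large traversals over intervals of length $\le T$.
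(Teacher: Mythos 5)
Your high-level strategy coincides with the paper's: each excursion from $U_{n}$ to the complement of $V_{n}$ forces $X$ to travel a distance at least $(1-\hat\beta)\dl_{n}$, and Lemma~\ref{lem:AS} is used to show that such a traversal cannot happen too quickly, so that infinitely many excursions would exceed the time horizon $T$. The geometric lower bound you state is exactly the paper's \eqref{eq:distant}. However, the step you yourself flag as ``the main obstacle'' is precisely where all the work lies, and your proposed remedies do not close it. First, the uniform-in-$k$ modulus bounds for $W$ that you ``expect to produce'' via Dambis--Dubins--Schwarz and L\'evy's modulus are problematic: on the bad event $\{\#\Sigma=\infty\}$ you have no a priori control of $|X|$ on $[0,\kappa_\infty)$ (non-explosion is what you are proving), so the accumulated quadratic variation of the martingale part of $W$ may be infinite there, and even restricted to the excursion intervals the local bound $C\dl_{n_k}^{\nu}$ is not summable against $\Delta\tau_k$ when $\dl_{n_k}\to\infty$; a L\'evy-modulus constant ``uniform in $k\in\Sigma$'' over infinitely many random intervals therefore does not come for free, and localizing at $\kappa_R$ does not help because the threshold below which the modulus estimate applies degrades with $R$. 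The paper avoids this entirely by working on shrinking windows $[\tau_{l_j},(\tau_{l_j}+1/j)\wg\tau_{l_j+1}]$, estimating $2p$-th moments of increments of $W$ there via Burkholder--Davis--Gundy and the local bound $M_n\le C\dl_n^\nu$, feeding these into the Garsia--Rodemich--Rumsey inequality to get $P(Z_j)\le cj^{-p(1-\nu)}$ with $p(1-\nu)>1$, and applying Borel--Cantelli; this yields the needed a.s.\ eventual bound $\|W\|_{\mathcal{H},[\tau,\hat\tau],\nu/2}<M_{n_{l_j}}^{1/2}$ with the correct $\dl_n^{\nu/2}$ scaling, regardless of whether $\{\dl_{n_k}\}$ is bounded.

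Second, applying Lemma~\ref{lem:AS} on the \emph{whole} interval $[\tau_k^{(\infty)},\tau_{k+1}^{(\infty)}]$, as you propose, runs into the exponential factor $e^{C_2\Delta_{s,t}(W)}$: since $X$ must traverse a distance of order $\dl_n$, which may be large, $\Delta_{s,t}(W)$ need not be small on that interval and the estimate becomes useless. The paper's proof subdivides the window of length $1/j$ into $N\approx\dl_{n_{l_j}}$ subintervals so that on each piece $\Delta_{t_k,t_{k+1}}(W)\le cj^{-\nu/2}$ is small, keeps the exponential factor bounded, and then sums to get $|X|_{(\tau_{l_j}+1/j)\wg\tau_{l_j+1}}^{\tau_{l_j}}\le c(1+\hat\dl^{-1})\dl_{n_{l_j}}j^{-\nu/2}<(1-\hat\beta)\dl_{n_{l_j}}$ for large $j$, whence $\tau_{l_j+1}-\tau_{l_j}>1/j$ and $\sum_j 1/j=\infty>T$ gives the contradiction. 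Your ``case analysis on whether $\Delta\tau_k\dl_n^{2\nu}\le 1$'' does not address this exponential factor, and the claimed uniform lower bound $\Delta\tau_k\ge c'(\om)>0$ is stronger than what the argument can deliver (and stronger than needed; a bound of order $1/j$ on the $j$-th excursion suffices). As written, the proposal identifies the right ingredients but leaves the two essential quantitative steps unproved.
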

\begin{proof}
We define a sequence of random numbers $\{\tilde l_j\}_{j=1}^\infty\subset \N\cup\{0,\infty\}$ as
\begin{align*}
\tilde l_1&=\inf\{k\ge0\mid n_k^{(\infty)}\in\N\}\wg T,\\
\tilde l_{j+1}&=\inf\{k> l_j \mid n_k^{(\infty)}\in\N\}\wg T,\quad j\in\N.
\end{align*}
Also, let
\[
  l_j=\begin{cases}
  \tilde l_j & \text{if }\tau_{\tilde l_j+1}^{(\infty)}<\kappa_\infty\\
  T& \text{if }\tau_{\tilde l_j+1}^{(\infty)}=\kappa_\infty,
  \end{cases}
  \quad j\in\N.
\]
Hereafter, we omit the superscript $(\infty)$ and write $n_k$ and $\tau_k$ for $n_k^{(\infty)}$ and $\tau_k^{(\infty)}$, respectively.
For $n\in\N$, we write $M_n=M(x_n,\dl_n,T)$.
We set
\begin{align*}
B_j&=\{l_j<T\},\quad B_{j,n}=B_j\cap\{n_{l_j}=n\},\\
\tilde B_j&=\{\tilde l_j<T\},\quad \tilde B_{j,n}=\tilde B_j\cap\{n_{\tilde l_j}=n\},
\quad j\in\N,\ n\in\N.
\end{align*}
Note that $\{B_j\}_{j=1}^\infty$ is a decreasing sequence and $\bigcap_{j=1}^\infty B_j=\{\#\Sigma=\infty\}$.
On each $B_j$, $X(\tau_{l_j})\in \cl{B}(x_{n_{l_j}}, \hat \beta \dl_{n_{l_j}})$ and $X(\tau_{l_j+1})\notin B(x_{n_{l_j}}, \dl_{n_{l_j}})$, implying that 
\begin{align}\label{eq:distant}
|X(\tau_{l_j+1})-X(\tau_{l_j})|\ge (1-\hat \beta)\dl_{n_{l_j}}.
\end{align}
We fix $j\in\N$ and write $\tau=\tau_{l_j}$, $\hat \tau=(\tau_{l_j}+1/j)\wg \tau_{l_j+1}$, $x=x_{n_{l_j}}$, and $\dl=\dl_{n_{l_j}}$.
Both $\tau$ and $\hat\tau$ are $\{\mathcal{F}_t\}_{t\ge0}$-stopping times.
In the following, $c$ denotes an unimportant positive constant that may vary line-by-line.
Take $p$ such that $p>(1-\nu)^{-1}$.
Fix $n\in\N$. For $s,t$ with $0\le s<t$, we have
\begin{align}
&E[|W((\tau+t)\wg \hat\tau)-W((\tau+s)\wg \hat\tau)|^{2p}:B_{j,n}]\nonumber\\
&\le E[|W((\tau+t)\wg \hat\tau)-W((\tau+s)\wg \hat\tau)|^{2p}:\tilde B_{j,n}]\nonumber\\
&= E[|W((\tilde\tau+t)\wg \hat\tau)-W((\tilde\tau+s)\wg \hat\tau)|^{2p}],\label{eq:W-Bjn}
\end{align}
where
\[
\tilde\tau=\begin{cases}
\tau&\text{on }\tilde B_{j,n},\\
T&\text{on }\Omega \setminus \tilde B_{j,n}.
\end{cases}
\]
Since $\tilde B_j\in\mathcal{F}_\tau$ and
\[
\{n_{l_j}=n\}=\{\tau<\kappa_\infty\}\cap \left\{X_\tau\in U_n\setminus \bigcup_{k=0}^{n-1}U_k\right\}\in\mathcal{F}_\tau,
\]
$\tilde B_{j,n}=\tilde B_j\cap\{n_{l_j}=n\}\in\mathcal{F}_\tau$ and $\tilde\tau$ is an $\{\mathcal{F}_t\}_{t\ge0}$-stopping time.
Therefore, from the Burkholder--Davis--Gundy inequality, the last term of \eqref{eq:W-Bjn} is dominated by
\begin{align*}
&c E\Biggl[\left(\int_{(\tilde\tau+s)\wg\hat\tau}^{(\tilde\tau+t)\wg\hat\tau}\|\sg(s,\cdot,X(s))\|^2\,ds\right)^p+\left(\int_{(\tilde\tau+s)\wg\hat\tau}^{(\tilde\tau+t)\wg\hat\tau}|b(s,\cdot,X(s))|\,ds\right)^{2p}\Biggr]\\
&\le c\{(t-s) M_n\}^p P(\tilde B_{j,n})+c\{(t-s) M_n^{1/2}\}^{2p} P(\tilde B_{j,n})\\
&\le c M_n^p\{(t-s)^p+(t-s)^{2p}\}P(\tilde B_{j,n}).
\end{align*}
For $\alpha \in (1,\infty)$ and $\lambda \in (1/\alpha,1)$, the Garsia--Rodemich--Rumsey inequality~\cite[Corollary~A.2]{FV} (see also \cite[Lemma~1.1]{GRR} for the original inequality) 
implies 
\begin{align}
&E \left[\sup_{0 \le s<t\le 1/j}\left( \frac{|W((\tau+t)\wg\hat\tau)-W((\tau+s)\wg\hat\tau)|}{(t-s)^{\lambda-(1/\alpha)}} \right)^\alpha : B_{j,n}\right]  \notag \\
&\le c \int_{0}^{1/j}\!\! \int_{0}^{1/j} \frac{E \left[|W((\tau+t)\wg\hat\tau)-W((\tau+s)\wg\hat\tau)|^\alpha: B_{j,n} \right]}{|t-s|^{\alpha \lambda+1}}\,ds\,dt. \label{eq:lt-2}
\end{align}
Then, letting $\alpha=2p$ and $\lambda=1/\alpha+\nu/2\,(<1)$ in \eqref{eq:lt-2}, we obtain that
\begin{align}
E\left[\|W\|_{\mathcal{H},[\tau,\hat\tau],\nu/2}^{2p}: B_{j,n}\right]
&\le c M_n^{p}\int_{0}^{1/j}\!\! \int_{0}^{1/j} \frac{|t-s|^{2p}+|t-s|^p}{|t-s|^{2+p\nu}}\,ds\,dt\,P(\tilde B_{j,n})\nonumber\\
&\le c M_n^{p}\int_{0}^{1/j}\!\! \int_{0}^{1/j} |t-s|^{-2+p(1-\nu)}\,ds\,dt\,P(\tilde B_{j,n})\nonumber\\
&\le c M_n^{p} j^{-p(1-\nu)}P(\tilde B_{j,n}). \label{eq:W_Bjn}
\end{align}
Now, let $Z_j=B_j\cap\{\|W\|_{\mathcal{H},[\tau,\hat\tau],\nu/2}\ge M_{n_{l_j}}^{1/2}\}$. Then $Z_j=\bigsqcup_{n=1}^\infty Z_{j,n}$, where 
\[
Z_{j,n}=B_{j,n}\cap\{\|W\|_{\mathcal{H},[\tau,\hat\tau],\nu/2}\ge M_n^{1/2}\}.
\]
The Markov inequality and \eqref{eq:W_Bjn} lead us to
\[
P(Z_{j,n})
\le M_n^{-p}E[\|W\|_{\mathcal{H},[\tau,\hat\tau],\nu/2}^{2p}:B_{j,n}]
\le cj^{-p(1-\nu)}P(\tilde B_{j,n}).
\]
Thus, we have
\[
P(Z_j)=\sum_{n=1}^\infty P(Z_{j,n})\le cj^{-p(1-\nu)}P(\tilde B_j)\le cj^{-p(1-\nu)}.
\]
Since $p(1-\nu)>1$, we have $\sum_{j=1}^\infty P(Z_j)<\infty$.
From Borel--Cantelli's lemma, for $P$-a.s.\,$\om$ there exists $j_0=j_0(\om)$ such that for all $j\ge j_0$,
\begin{align}\label{eq:W-Bj}
\|W\|_{\mathcal{H},[\tau,\hat\tau],\nu/2}< M_{n_{l_j}}^{1/2} \quad\text{if }\om\in B_j.
\end{align}
Fix such $\om$ in $\bigcap_{j=1}^\infty B_j=\{\#\Sigma=\infty\}$. Let $j\in\N$, let $N$ be the smallest integer such that $N \ge \dl_{n_{l_j}}$, and write \[
t_k=\left(\tau_{l_j}+\frac{k}{j N}\right)\wg \tau_{l_j+1},\quad k=0,1,\dots, N.
\]
For an integer $k$ with $0\le k<N$ and $s,t$ with $t_k\le s<t\le t_{k+1}$, we have from \eqref{eq:W-Bj}
\[
|W(t)-W(s)|\le M_{n_{l_j}}^{1/2}(t-s)^{\nu/2}.
\]
This, together with \eqref{eq:thm2asmp}, implies that
\begin{equation}
\Delta_{t_k,t_{k+1}}(W)\le M_{n_{l_j}}^{1/2}(jN)^{-\nu/2} 
\le M_{n_{l_j}}^{1/2}(j \dl_{n_{l_j}})^{-\nu/2} 
\le c j^{-\nu/2}. \label{eq:dlN1}
\end{equation}
Applying Lemma~\ref{lem:AS} and using \eqref{eq:dlN1}, \eqref{eq:W-Bj}, and \eqref{eq:thm2asmp}, we obtain that
\[
|X|_{t_{k+1}}^{t_k}
\le C_1\left(1+\frac1{j N}(M_{n_{l_j}}^{1/2})^{2/\nu}\right)e^{C_2 c j^{-\nu/2}}c j^{-\nu/2} 
\le c j^{-\nu/2}
\]
for any integer $k$ with $0\le k<N$. Thus, we arrive at
\begin{align*}
|X|_{(\tau_{l_j}+1/j)\wg \tau_{l_j+1}}^{\tau_{l_j}}& \le \sum_{k=0}^{N-1}|X|_{t_{k+1}}^{t_k}
\le N c j^{-\nu/2}\\
&\le c(1+\hat\dl^{-1})\dl_{n_{l_j}}j^{-\nu/2},
\end{align*}
where $\hat\dl$ is a constant in the assumption of Theorem~\ref{thm:2}.
For sufficiently large $j$ (say, greater than or equal to $j_1=j_1(\om)\ge j_0(\om)$), $c(1+\hat\dl^{-1})j^{-\nu/2}<1-\hat \beta$. 
In view of \eqref{eq:distant}, it holds for such $j$ that $\tau_{l_j}+1/j<\tau_{l_j+1}$.
Then, for $P$-a.s.\,$\om$ in $\bigcap_{j=1}^\infty B_j=\{\#\Sigma=\infty\}$,
\[
T\ge \sum_{j=j_1(\om)}^\infty (\tau_{l_j+1}-\tau_{l_j})\ge \sum_{j=j_1(\om)}^\infty \frac1j=\infty,
\]
which is absurd. 
Therefore, $P(\#\Sigma=\infty)=0$.
\qed
\end{proof}

\begin{lemma}\label{lem:cons2}
For $k\ge0$,
\[
\sup_{\tau_{k}^{(\infty)}\le t <\tau_{k+1}^{(\infty)}}|X(t)|<\infty \quad P\text{-a.s.\ on }\{n_k^{(\infty)}=0\}.
\]
\end{lemma}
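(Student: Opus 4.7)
The plan is to argue by contradiction. Writing $\tau:=\tau_k^{(\infty)}$ and $\hat\tau:=\tau_{k+1}^{(\infty)}$, suppose that on some event $A\subset\{n_k^{(\infty)}=0\}$ of positive probability one has $\sup_{\tau\le t<\hat\tau}|X(t)|=\infty$. Introduce the localizing stopping times $\sigma_R:=\inf\{t\ge\tau\mid|X(t)|\ge R\}\wg\hat\tau$. On $A$ one then has $\sigma_R<\hat\tau$ for every $R$, and since $R\mapsto\sigma_R$ is non-decreasing, $\sigma_R\le T$, $|X(\sigma_R)|=R$ whenever $\sigma_R<\hat\tau$, and $X$ is continuous on $[0,\zt_X)$, one must have $\lim_{R\to\infty}\sigma_R=\zt_X=\hat\tau<\infty$ on $A$. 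The goal is then to show $P(A)=0$.

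I would try to produce an upper bound on $|X|_{\sigma_R}^\tau$ that grows strictly more slowly than $R$, and then combine with the inclusion $\{\sigma_R<\hat\tau\}\subset\{|X|_{\sigma_R}^\tau\ge R-|X(\tau)|\}$ and Chebyshev's inequality to conclude $P(\sigma_R<\hat\tau)\to0$, which rules out $A$ by monotone continuity. Since $|X|\le R$ on $[\tau,\sigma_R]$, assumption~(1) of Theorem~\ref{thm:2} and $\int_\tau^{\sigma_R}g(s,\cdot)\,ds\le M$ yield $\int_\tau^{\sigma_R}(\|\sg\|^2\vee|b|^2)\,ds\le M\gamma(R^2)$, and the Burkholder--Davis--Gundy and Garsia--Rodemich--Rumsey inequalities control the $L^{2p}$-moments of $\Delta_{\tau,\sigma_R}(W)$ and $\|W\|_{\mathcal{H},[\tau,\sigma_R],\theta}$ by powers of $M\gamma(R^2)$, exactly as in the proof of Lemma~\ref{lem:cons3}. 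Mirroring the partition strategy used there, I would subdivide $[\tau,\sigma_R]$ into short subintervals on which $\int g\,ds$ is of a fixed order, apply Lemma~\ref{lem:AS} piece by piece so that $\Delta(W)$ per piece is of unit order with high probability (keeping the exponential factor $e^{C_2\Delta(W)}$ under control), and sum the per-piece variation bounds for $|X|$. Whenever $X(t)\in\partial D$ during $[\tau,\hat\tau)$, the containment $X(t)\in V_0$ forces $X(t)\in B(x_n,\dl_n)$ with $|X(t)-x_n|>\hat\beta\dl_n/3$ for some $n$, so the local bound $M(x_n,\dl_n,T)\le C\dl_n^\nu$ from assumption~(2) governs the reflection contribution on those pieces and is essential for keeping the per-piece estimates uniform in $n$.

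The principal obstacle is taming the factor $e^{C_2\Delta(W)}$ in Lemma~\ref{lem:AS}: in the borderline case $\gamma(s)\sim s\log s$ permitted by $\int ds/\gamma=\infty$, a single raw application of the lemma to $[\tau,\sigma_R]$ gives $\Delta(W)$ of order $R(\log R)^{1/2}$, producing super-polynomial bounds on $|X|_{\sigma_R}^\tau$ and defeating the Chebyshev step. The partition technique circumvents this, but calibrating the length of the subintervals simultaneously against the global budget $\int g\,ds\le M$, the local coefficient bounds from assumption~(2), and the target $E\bigl[(|X|_{\sigma_R}^\tau)^{2p}\bigr]=o(R^{2p})$ is the delicate step of the argument and constitutes the technical heart of the proof.
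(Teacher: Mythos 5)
There is a genuine gap, and it is quantitative rather than organizational. Your plan reduces the lemma to showing $P(\sigma_R<\hat\tau)\to 0$ via a moment bound $E\bigl[(|X|_{\sigma_R}^{\tau})^{2p}\bigr]=o(R^{2p})$ and Chebyshev. But the only control you have on the coefficients along $[\tau,\sigma_R]$ is \eqref{eq:cond2} together with $\int_\tau^{\sigma_R}g\,ds\le M$, which gives $\int_\tau^{\sigma_R}\|\sg(s,\cdot,X(s))\|^2\,ds\le M\gamma(R^2)$. Already for the martingale part of $W$, Burkholder--Davis--Gundy then yields $E[\Delta_{\tau,\sigma_R}(W)^{2p}]\lesssim (M\gamma(R^2))^{p}$, and since $\gamma$ is allowed to be of order $s$ or $s\log s$, this is of order $(MR^2)^p$ or $(MR^2\log R)^p$ --- \emph{not} $o(R^{2p})$. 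So Chebyshev gives at best $P(\sigma_R<\hat\tau)\lesssim M^p$ (or worse, $(M\log R)^p$), and no partition of $[\tau,\sigma_R]$ can repair this: partitioning tames the exponential factor $e^{C_2\Delta(W)}$ in Lemma~\ref{lem:AS}, but it cannot make the free displacement of $W$ smaller than the BDG bound already forces it to be. To beat linear-or-worse growth of the coefficients one needs an Osgood/Bihari-type argument exploiting $\int_0^\infty ds/\gamma(s)=\infty$, not a polynomial moment estimate against $R^{2p}$.

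That is exactly what the paper does, and it also uses a structural fact you overlooked: on $\{n_k^{(\infty)}=0\}$ the process stays in $V_0$ throughout $[\tau_k^{(\infty)},\tau_{k+1}^{(\infty)})$, hence does not meet $\partial D$ there, so $d\Phi_X=0$ on that interval and the entire reflection machinery (Lemma~\ref{lem:AS}, Garsia--Rodemich--Rumsey, assumption~(2) of Theorem~\ref{thm:2}) is unnecessary for this lemma --- it is needed only in Lemma~\ref{lem:cons3}. The paper sets $\xi_k(t)=|X(t\wg\tau_{k+1})-X(t\wg\tau_k)|^2$, applies the It\^o--Tanaka formula to the concave function $\varphi(t,u)=\int_0^t \gamma(2s+2u)^{-1}\,ds$ with $u=|X(\tau_k)|^2$, uses $\gamma(2|X(s)-X(\tau_k)|^2+2|X(\tau_k)|^2)\ge\gamma(|X(s)|^2)$ to cancel the growth of the coefficients against the denominator, and obtains the bound $E[\varphi(\xi_k(\tau_{k+1}^{(R)}),|X(\tau_k^{(R)})|^2):A_k^{(R)}]\le T+2M$ uniformly in $R$; since \eqref{eq:gm} forces $\varphi(t,u)\to\infty$ as $t\to\infty$, explosion on $\{n_k^{(\infty)}=0\}$ is impossible. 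You should replace the variation/Chebyshev scheme with this Lyapunov-type computation; as written, your argument fails already for coefficients of linear growth, which the theorem is designed to cover.
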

\begin{proof}
For $k \ge 0$ and $R \in \N \cup \{\infty\}$, we set 
\begin{align*}
A_k^{(R)}=\{ \om \in \Omega \mid n_k^{(R)}(\omega)=0\},
\end{align*}
which is $\mathcal{F}_{\tau_k^{(R)}}$-measurable.
For the moment, we fix $k\ge0$ and $R\in\N$, and suppress the superscript $(R)$ from the notation. 
We define 
\[
\xi_k(t)=|X(t\wg \tau_{k+1})-X(t\wg \tau_k)|^2,\quad t\ge0.
\]
For $t,u  \ge0$, we define 
\begin{align*}
\varphi(t,u)=\int_{0}^{t}\frac{1}{\gamma(2s+2u)}\,ds.
\end{align*}
For each $u \ge0$, $\R_+ \ni t \mapsto \varphi(t,u) \in \R$ is a concave function since $\gamma$ is a non-decreasing function on $\R_{+}$, so it can extend to a concave function on $\R$. This extension is still denoted by $\varphi$. We denote by $\varphi'$ the derivative of $\varphi(t,u)$ in $t$. Applying the It\^o--Tanaka formula to $\varphi (\cdot,u) \circ\xi_k$, we obtain for $u \ge 0$ that
\begin{align}
\varphi(\xi_k(\tau_{k+1}),u)&=\int_{\tau_k}^{\tau_{k+1}} \varphi'(\xi_k(s),u)\,d\xi_k(s)+\frac{1}{2} \int_{\R} (L_{\tau_{k+1}}^{a}-L_{\tau_k}^a) \varphi''(da,u)\notag\\
&=:I_1+I_2.\label{eq:cons1}
\end{align}
Here, $\varphi''(da,u)$ denotes the second derivative of $t \mapsto \varphi(t,u)$ in the sense of distribution. Then, $I_2\le0$.
We also have
\begin{align}
I_1&=2 \int_{\tau_k}^{\tau_{k+1}} \varphi'(\xi_k(s),u) \langle \sg^{\ast}(s,\cdot,X(s))(X(s)-X(\tau_k)),dB(s) \rangle \notag  \\
&\quad + \int_{\tau_k}^{\tau_{k+1}} \varphi'(\xi_k(s),u) \|\sg(s,\cdot,X(s))\|^2\,ds \notag \\
&\quad +2 \int_{\tau_k}^{\tau_{k+1}}\varphi'(\xi_k(s),u) \langle X(s)-X(\tau_k), b(s,\cdot,X(s))\rangle \,ds \notag  \\
&\quad +2 \int_{\tau_k}^{\tau_{k+1}} \varphi'(\xi_k(s),u) \langle  X(s)-X(\tau_k),  d\Phi_{X}(s) \rangle. \label{eq:I1}
\end{align}
On event $A_k$, we have $d\Phi_X=0$ on $[\tau_k,\tau_{k+1}]$ since $X(t) \notin \partial D$ for any $t \in [\tau_k, \tau_{k+1}]$. 
The first term of \eqref{eq:I1} is expressed as 
\[
2 \int_{0}^{\tau_{k+1}}  \varphi'(\xi_k(s),u) \langle \sg^{\ast}(s,\cdot,X(s))(X(s \wg \tau_{k+1})-X(s \wg \tau_k)),dB(s) \rangle.
\]
By \eqref{eq:cond2} and the fact that $\tau_{k+1} \le \kappa_R$, we see that
\[
\left\{  \int_{0}^{t}  \varphi'(\xi_k(s),|X(\tau_k)|^2) \langle \sg^{\ast}(s,\cdot,X(s))(X(s \wg \tau_{k+1})-X(s \wg \tau_k)),dB(s) \rangle \right\}_{t \in [0,\infty]}
\]
is a martingale by confirming that its quadratic variation is integrable at $t=\infty$. Therefore, by letting $u=|X(\tau_k)|^2$ in \eqref{eq:cons1} and taking the expectation of both sides of \eqref{eq:cons1} on $A_k$, we obtain
\begin{align*}
&E [\varphi(\xi_k(\tau_{k+1}),|X(\tau_k)|^2) :A_k]\notag  \\
&\le E\left[ \int_{\tau_k}^{\tau_{k+1}} \varphi'(\xi_k(s),|X(\tau_k)|^2) \|\sg(s,\cdot,X(s))\|^2\,ds :A_k \right] \notag \\
&\quad +2 \left[ \int_{\tau_k}^{\tau_{k+1}}\varphi'(\xi_k(s),|X(\tau_k)|^2) \langle X(s)-X(\tau_k), b(s,\cdot,X(s))\rangle \,ds :A_k  \right]\notag\\
&=:J_1+J_2.
\end{align*}
Recall that $\gamma(s)\colon\R_{+} \to \R_{+}$ is non-decreasing. 
By Lemma~\ref{lem:vee}(2), we may assume $\gamma(s)\ge s$ for $s \ge 0$ without loss of generality.
Then, we see from \eqref{eq:cond2}  that
\begin{align*}
J_1&\le E\left[ \int_{\tau_k}^{\tau_{k+1}} \frac{g(s,\cdot) \gamma(|X(s)|^2)}{\gamma(2|X(s)-X(\tau_k)|^2+2|X(\tau_k)|^2)}\,ds :A_k \right] \notag   \\
&\le E\left[ \int_{\tau_k}^{\tau_{k+1}} \frac{g(s,\cdot) \gamma(|X(s)|^2)}{\gamma(|X(s)|^2)}\,ds :A_k \right]
\le M
\end{align*}
and
\begin{align*}
J_2
&\le E\left[ \int_{\tau_k}^{\tau_{k+1}} \frac{|X(s)-X(\tau_k)|^2+|b(s,\cdot,X(s))|^2}{\gamma(2|X(s)-X(\tau_k)|^2+2|X(\tau_k)|^2)} \,ds :A_k \right] \notag \\
&  \le E\left[ \int_{\tau_k}^{\tau_{k+1}} \frac{|X(s)-X(\tau_k)|^2}{\gamma(|X(s)-X(\tau_k)|^2)}+\frac{|b(s,\cdot,X(s))|^2}{\gamma(|X(s)|^2)} \,ds :A_k \right] \notag \\
&  \le T+M.
\end{align*}
Combining these estimates, we obtain
\begin{align}
&E \left[\varphi\left(|X(\tau_{k+1}^{(R)})-X(\tau_k^{(R)})|^2,|X(\tau_k^{(R)})|^2\right) :A_k^{(R)}\right] \le T+2M. \label{eq:cons5}
\end{align}
It clearly holds that $\lim_{R \to \infty}A_k^{(R)}=\bigcup_{R\in\N}A_k^{(R)}=A_{k}^{(\infty)}$. By letting $R \to \infty$ in \eqref{eq:cons5} and using \eqref{eq:gm}, we obtain the conclusion. \qed
\end{proof}

\begin{proof}[of Theorem~\protect\ref{thm:2}]
By Lemmas~\ref{lem:cons1} and \ref{lem:cons3}, for $P$-a.s.\,$\om$, the number of $k \ge 0$ such that $n_k^{(\infty)}(\om)<\infty$ is finite. 
Let $\hat k=\hat k(\om)$ be the largest integer $k$ such that $n_k^{(\infty)}(\om)<\infty$.
If $n_{\hat k}^{(\infty)}(\om)=0$, then Lemma~\ref{lem:cons2} implies that
\begin{align}\label{eq:ztX}
 \zt_X(\om) \ge \tau_{\hat{k}+1}^{(\infty)}(\om)=\kappa_{\infty}(\om)=\kappa(\om). 
 \end{align}
  If $n_{\hat k}^{(\infty)}(\om)\in\N$,  \eqref{eq:ztX} clearly holds. Thus, it holds that $ \zt_X \ge \kappa$ $P$-a.s. Because $T$ and $M$ are arbitrarily chosen, we complete the proof from \eqref{eq:g}. \qed
\end{proof}


%




\end{document}